\newcommand{\comment}[1]{}
\newcommand{\bR}{{\mathbb R}}
\def\D{{\mathcal D}}
\def\G{{\mathcal G}}
\def\H{{\mathcal H}}
\def\a{\mathfrak a}
\def\b{\mathfrak b}
\def\n{\mathfrak n}
\def\M{{\mathcal M}}
\def\X{{\mathcal X}}
\begin{document}

\title[On the product of functions in $BMO$ and $H^1$]{On the product of functions in $BMO$ and $H^1$ over spaces of homogeneous type}         % Enter your title between curly braces

\author{\bf Luong Dang Ky}  
\address{Department of Mathematics, University of Quy Nhon, 170 An Duong Vuong, Quy Nhon, Binh Dinh, Viet Nam} 
\email{{\tt dangky@math.cnrs.fr}}

\keywords{Musielak-Orlicz function, Hardy space, BMO, space of homogeneous type, admissible function, atomic decomposition, maximal function}
\subjclass[2010]{42B35, 32A35}

\begin{abstract}
Let $\mathcal X$ be an RD-space, which means that $\mathcal X$ is a space of homogeneous type in the sense of Coifman-Weiss with the additional property that a reverse doubling property holds in $\mathcal X$. The aim of the present paper is to study the product of functions in $BMO$ and $H^1$ in this setting. Our results generalize some recent results in \cite{Feu} and \cite{LP}.

\end{abstract}

\maketitle
\newtheorem{theorem}{Theorem}[section]
\newtheorem{lemma}{Lemma}[section]
\newtheorem{proposition}{Proposition}[section]
\newtheorem{remark}{Remark}[section]
\newtheorem{corollary}{Corollary}[section]
\newtheorem{definition}{Definition}[section]
\newtheorem{example}{Example}[section]
\numberwithin{equation}{section}
\newtheorem{Theorem}{Theorem}[section]
\newtheorem{Lemma}{Lemma}[section]
\newtheorem{Proposition}{Proposition}[section]
\newtheorem{Remark}{Remark}[section]
\newtheorem{Corollary}{Corollary}[section]
\newtheorem{Definition}{Definition}[section]
\newtheorem{Example}{Example}[section]
\newtheorem*{theoremjj}{Theorem J-J}
\newtheorem*{theorema}{Theorem A}
\newtheorem*{theoremb}{Theorem B}
\newtheorem*{theoremc}{Theorem C}
\newtheorem*{conjecture}{Conjecture}
\newtheorem*{open question}{Open question}

\section{Introduction and statement of main results}

A famous result of C. Fefferman state that $BMO(\bR^n)$ is the dual space of $H^1(\bR^n)$. Although, for $f\in BMO(\bR^n)$ and $g\in H^1(\bR^n)$, the point-wise product $fg$ may not be an integrable function, one (see \cite{BIJZ}) can view the product of $f$ and $g$ as a distribution, denoted by $f\times g$. Such a distribution can be written as the sum of an integrable function and a distribution in a new Hardy space, so-called Hardy space of  Musielak-Orlicz type (see \cite{BGK, Ky1}). A complete study about the product of functions in $BMO$ and $\H^1$ has been firstly done by Bonami, Iwaniec, Jones and Zinsmeister \cite{BIJZ}. Recently, Li and Peng \cite{LP} generalized this study to the setting of Hardy and $BMO$ spaces associated with Schr\"odinger operators. In particular, Li and Peng showed that if $L= -\Delta + V$ is a Schr\"odinger operator with the potential $V$ belongs to the reverse H\"older class $RH_q$ for some $q\geq n/2$, then one can view the product of $b \in BMO_L(\bR^n)$ and $f\in H^1_L(\bR^n)$ as a distribution $b\times f$ which can be written the sum of an integrable function and a distribution in $H^{\wp}_L(\bR^n,d\mu)$. Here $H^{\wp}_L(\bR^n,d\mu)$ is the weighted Hardy-Orlicz space associated with $L$, related to the Orlicz function $\wp(t) =t/\log(e+t)$ and the weight $d\nu(x)= dx/\log(e+|x|)$. More precisely, they proved the following.

\begin{theorema}
For each $f\in H^1_L(\bR^n)$, there exist two bounded linear operators  ${\mathscr L}_f: BMO_L(\bR^n) \to L^1(\bR^n)$ and ${\mathscr H}_f: BMO_L(\bR^n) \to H^{\wp}_L(\bR^n,d\nu)$ such that for every $b\in BMO_L(\bR^n)$,
$$b\times f = {\mathscr L}_f(b) + {\mathscr H}_f(b).$$
\end{theorema}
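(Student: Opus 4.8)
The plan is to reduce the statement to the atomic level, build the two operators first on atoms, and then extend by linearity. First I would fix an atomic decomposition $f=\sum_j\lambda_j a_j$ of $f\in H^1_L(\bR^n)$, where each $a_j$ is an $L$-atom supported in a ball $B_j=B(x_j,r_j)$ with $\|a_j\|_\infty\le |B_j|^{-1}$ and $\sum_j|\lambda_j|\lesssim\|f\|_{H^1_L}$. In the Schr\"odinger setting the atoms split into ``small'' balls ($r_j<\rho(x_j)$, carrying the cancellation $\int a_j=0$) and ``large'' balls ($r_j\ge\rho(x_j)$, with no moment condition), where $\rho$ is the critical radius attached to $L$. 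The decisive elementary identity is, for a single atom $a$ on a ball $B$,
$$b\times a=(b-b_B)\,a+b_B\,a,$$
where $b_B$ denotes the average of $b$ over $B$. I would then set
$$\mathscr L_f(b)=\sum_j\lambda_j(b-b_{B_j})a_j,\qquad \mathscr H_f(b)=\sum_j\lambda_j\,b_{B_j}a_j.$$

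The boundedness of $\mathscr L_f$ into $L^1(\bR^n)$ is the easy half. By the John--Nirenberg inequality, $|B|^{-1}\int_B|b-b_B|\lesssim\|b\|_{BMO_L}$, so $\|(b-b_{B_j})a_j\|_{L^1}\lesssim\|b\|_{BMO_L}$ uniformly in $j$; summing against $\sum_j|\lambda_j|\lesssim\|f\|_{H^1_L}$ then gives $\|\mathscr L_f(b)\|_{L^1}\lesssim\|f\|_{H^1_L}\|b\|_{BMO_L}$, and linearity in $b$ is clear.

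The core of the argument is that $\mathscr H_f$ maps $BMO_L(\bR^n)$ boundedly into the weighted Hardy--Orlicz space $H^\wp_L(\bR^n,d\nu)$, and this is where I expect the main difficulty. Each summand $b_{B_j}a_j$ is a scalar multiple of an $L$-atom, but the scalar $|b_{B_j}|$ is controlled only up to a logarithmic factor in the radius and location of $B_j$; the whole point of the choice $\wp(t)=t/\log(e+t)$ together with the weight $d\nu(x)=dx/\log(e+|x|)$ is that this Musielak--Orlicz pairing absorbs exactly that logarithmic growth. Concretely, I would verify that a multiple $\lambda\,b_B\,a$ of an atom is, up to a uniform constant, a $\wp$-atom for $H^\wp_L(\bR^n,d\nu)$, and then invoke the atomic (or molecular) characterization of $H^\wp_L(\bR^n,d\nu)$ to sum the series. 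The key quantitative input is a generalized H\"older inequality of the shape $\|g\,b\|_{L^\wp(d\nu)}\lesssim\|g\|_{H^1_L}\|b\|_{BMO_L}$, which bounds the Orlicz--Luxemburg norm of the product; combined with the maximal-function characterization of $H^\wp_L$ this yields $\|\mathscr H_f(b)\|_{H^\wp_L(d\nu)}\lesssim\|f\|_{H^1_L}\|b\|_{BMO_L}$.

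Finally I would check consistency: that $\mathscr L_f$ and $\mathscr H_f$ do not depend on the chosen atomic decomposition and that $b\times f=\mathscr L_f(b)+\mathscr H_f(b)$ holds in the sense of distributions. Once both series are shown to converge (in $L^1$ and in the quasi-norm of $H^\wp_L(d\nu)$ respectively), their sum reproduces the distribution $b\times f$, which is itself independent of the decomposition; a standard limiting argument, first for $b\in L^\infty$ against a single atom and then by density, closes the identity. The two delicate points to watch are the uniform-in-$j$ control of $|b_{B_j}|$ on the large balls, handled by the local nature of $BMO_L$ and the weight $\nu$, and the convergence of $\mathscr H_f(b)$ in the quasi-norm of $H^\wp_L(d\nu)$ rather than merely distributionally.
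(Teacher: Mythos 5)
You are proving Theorem A, which the paper itself does not prove but quotes from Li--Peng; the paper's own argument for this circle of results is the proof of Theorems \ref{the first main theorem} and \ref{the second main theorem} (the latter specializes to Theorem A when $\X=\bR^n$ and $\rho$ is the critical radius attached to $L$), so that is the natural comparison. Your construction of $\mathscr L_f$ and the $L^1$ estimate for $\sum_j\lambda_j(b-b_{B_j})a_j$ coincide with the paper's and are fine; likewise your observation that on large balls ($r_j\ge\rho(x_j)$) the averages $|b_{B_j}|$ are uniformly controlled by $\|b\|_{BMO_L}$ is correct (the paper instead routes the whole product $b f_2$ over large balls into the $L^1$ part, which amounts to the same thing).

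The genuine gap is in your treatment of $\mathscr H_f$ on the small balls. You propose to ``verify that a multiple $\lambda\,b_B\,a$ of an atom is, up to a uniform constant, a $\wp$-atom'' and then sum via the atomic characterization. This cannot work: for small balls $|b_{B_j}|$ is bounded only by $C\|b\|_{BMO_L}\log(e+|x_j|+1/r_j)$, which is not uniform in $j$ --- if it were, $\mathscr H_f(b)$ would already lie in $H^1_L$ and the Orlicz target would be superfluous; moreover, for Orlicz--Hardy spaces an $\ell^1$ condition on atomic coefficients is not the right summability hypothesis. The step that actually closes the argument (and is the heart of the paper's proof) is to reinsert $b$: one bounds the grand maximal function by
$$\M\Big(\sum_j\lambda_j b_{B_j}a_j\Big)\le\sum_j|\lambda_j|\,|b-b_{B_j}|\,\M(a_j)+|b|\sum_j|\lambda_j|\,\M(a_j),$$
controls the first sum in $L^1$ by the uniform estimate $\|(\b-\b_B)\M(\a)\|_{L^1}\le C\|\b\|_{BMO}$ (Lemma \ref{Feuto, Lemma 3.1}), and controls the second by the generalized H\"older inequality $\|fg\|_{L^{\log}}\le C\|f\|_{L^1}\|g\|_{BMO^+}$ of Proposition \ref{key lemma for Orlicz functions}, applied with $f=\sum_j|\lambda_j|\M(a_j)\in L^1$; that inequality in turn rests on the exponential integrability of $BMO$ functions against the weight (Lemma \ref{Feuto, Lemma 3.2}). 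You do name a generalized H\"older inequality as ``the key quantitative input,'' but without the add-and-subtract step it does not attach to the series you wrote down, and your primary route through uniform $\wp$-atoms is false. A last, minor point: independence of the atomic decomposition need not be checked, since $b\times f$ is defined intrinsically by duality, $\langle b\times f,\phi\rangle=\langle\phi b,f\rangle$, and one may simply set $\mathscr H_f(b):=b\times f-\mathscr L_f(b)$.
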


Let $(\X,d,\mu)$ be a space of homogeneous type in the sense of Coifman-Weiss. Following Han, M\"uller and Yang \cite{HMY}, we say that $(\X,d,\mu)$ is an {\sl RD-space} if $\mu$ satisfies {\sl reverse doubling property}, i.e., there exists a constant $C>1$ such that for all $x\in \mathcal X$ and $r>0$,
$$\mu(B(x,2r))\geq C \mu(B(x,r)).$$
A typical example for such RD-spaces is the Carnot-Carath\'eodory space with doubling measure. We refer to the seminal paper of Han, M\"uller and Yang \cite{HMY} (see also \cite{GLY, GLY2, YYZ, YZ}) for a systematic study of the theory of function spaces in harmonic analysis on RD-spaces.

 Let $(\X,d,\mu)$ be an RD-space. Recently, in analogy with the classical result of Bonami-Iwaniec-Jones-Zinsmeister, Feuto proved in \cite{Feu} that:

\begin{theoremb}
For each $f\in H^1(\X)$, there exist two bounded linear operators  ${\mathscr L}_f: BMO(\X) \to L^1(\X)$ and ${\mathscr H}_f: BMO(\X) \to H^{\wp}(\X,d\nu)$ such that for every $b\in BMO(\X)$,
$$b\times f = {\mathscr L}_f(b) + {\mathscr H}_f(b).$$
\end{theoremb}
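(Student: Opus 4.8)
The plan is to adapt the Bonami-Iwaniec-Jones-Zinsmeister scheme (the strategy underlying Theorem A) to the RD-space setting, taking the atomic decomposition of $H^1(\X)$ as the starting point. First I would fix $f\in H^1(\X)$ and write $f=\sum_j \lambda_j a_j$, where each $a_j$ is an $H^1$-atom supported on a ball $B_j$, normalized so that $\|a_j\|_{L^\infty}\le \mu(B_j)^{-1}$ with $\int a_j\,d\mu=0$, and $\sum_j|\lambda_j|\lesssim \|f\|_{H^1(\X)}$. Interpreting the product distributionally, one has $b\times f=\sum_j\lambda_j\, b\,a_j$, and on each atom I would split
$$b\,a_j=(b-b_{B_j})\,a_j + b_{B_j}\,a_j,\qquad b_{B_j}=\frac{1}{\mu(B_j)}\int_{B_j}b\,d\mu.$$
This suggests defining
$$\mathscr L_f(b):=\sum_j\lambda_j (b-b_{B_j})a_j,\qquad \mathscr H_f(b):=\sum_j\lambda_j\, b_{B_j}\,a_j,$$
so that $b\times f=\mathscr L_f(b)+\mathscr H_f(b)$ holds by construction.

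The boundedness $\mathscr L_f:BMO(\X)\to L^1(\X)$ is the easy half. For a single term, the support and size of the atom together with the definition of $b_{B_j}$ give
$$\int_\X |(b-b_{B_j})a_j|\,d\mu\le \frac{1}{\mu(B_j)}\int_{B_j}|b-b_{B_j}|\,d\mu\le \|b\|_{BMO(\X)},$$
and summing over $j$ yields $\|\mathscr L_f(b)\|_{L^1(\X)}\lesssim \|f\|_{H^1(\X)}\|b\|_{BMO(\X)}$. Thus $\mathscr L_f$ is bounded with operator norm controlled by $\|f\|_{H^1(\X)}$, as required.

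The substantial part is to prove that $\mathscr H_f:BMO(\X)\to H^{\wp}(\X,d\nu)$ is bounded; this is where the Orlicz function $\wp(t)=t/\log(e+t)$ and the weight $d\nu$ enter decisively. Each $b_{B_j}a_j$ is a constant multiple of an atom, but the constants $b_{B_j}$ are not uniformly bounded: relative to a fixed reference ball $B_0$ they grow only logarithmically in the scale and location of $B_j$, by the standard oscillation estimate $|b_{B_j}-b_{B_0}|\lesssim \|b\|_{BMO}\log(\cdots)$ together with reverse doubling (after isolating the harmless term $b_{B_0}f$, which already lies in $L^1\cap H^{\wp}(\X,d\nu)$). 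I would then work through the radial/grand maximal characterization of $H^{\wp}(\X,d\nu)$ and estimate $\int_\X \wp\big(\mathcal M(\mathscr H_f(b))/\Lambda\big)\,d\nu$ for a suitable $\Lambda\approx \|f\|_{H^1}\|b\|_{BMO}$, using the concavity of $\wp$ and the subadditivity of the maximal operator to distribute the sum, and exploiting the fact that $\wp$ absorbs exactly the logarithmic growth of $b_{B_j}$ while $d\nu$ compensates for the spatial spreading of the atoms. I expect this weighted Orlicz estimate — reconciling the logarithmic loss in the averages $b_{B_j}$ against the gain furnished by $\wp$ and $d\nu$ on an RD-space, where only doubling/reverse-doubling volume growth rather than Euclidean structure is available — to be the main obstacle.

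Finally, I would address well-definedness. Since the atomic decomposition of $f$ is not unique (and $b$ is determined only up to additive constants), the individual operators a priori depend on these choices, so I would verify independence either by establishing the bounds first on a dense class (finite atomic combinations, or $H^1(\X)\cap L^2(\X)$) and extending by density using the closedness of $L^1(\X)$ and $H^{\wp}(\X,d\nu)$, or by matching $\mathscr L_f(b)+\mathscr H_f(b)$ against the intrinsic distributional definition of $b\times f$. Linearity in $b$ is immediate from the construction.
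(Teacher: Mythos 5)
Your skeleton coincides with the paper's: the same atomic splitting $b\,a_j=(b-b_{B_j})a_j+b_{B_j}a_j$, the same operators $\mathscr L_f$ and $\mathscr H_f$, and the same one-line $L^1$ estimate for $\mathscr L_f$. (For orientation: Theorem B is Feuto's result, which the paper cites rather than reproves; the paper establishes the stronger Theorem 1.1 with target $H^{\log}(\X)\subset H^{\wp}(\X,d\nu)$, and Theorem B follows.) The problem is that the only genuinely hard step --- the boundedness of $\mathscr H_f$ into the Orlicz-type Hardy space --- is left as a plan rather than carried out: you describe what you would do with the logarithmic growth of $|b_{B_j}-b_{B_0}|$, the concavity of $\wp$ and the weight $d\nu$, and then state that you ``expect this weighted Orlicz estimate \dots to be the main obstacle.'' That obstacle \emph{is} the theorem, so as written the proposal has a genuine gap. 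Your intended route (quantify $|b_{B_j}-b_{B_0}|\lesssim\|b\|_{BMO}\log(\cdots)$ and sum) is essentially Feuto's original argument and can be completed, but it requires careful bookkeeping of the volume growth via \eqref{RD-spaces}. The paper avoids this entirely with a shorter reduction: pointwise, $|b_{B_j}|\M(a_j)\le|b-b_{B_j}|\M(a_j)+|b|\M(a_j)$; the first sum is controlled in $L^1$ (hence in the Orlicz norm) by Lemma \ref{Feuto, Lemma 3.1}, $\|(\b-\b_B)\M(\a)\|_{L^1}\lesssim\|\b\|_{BMO}$, and the second by the generalized H\"older inequality of Proposition \ref{key lemma for Orlicz functions}, $\|gh\|_{L^{\log}}\lesssim\|g\|_{L^1}\|h\|_{BMO^+}$, which rests only on the $\exp L$ integrability of $BMO$ functions (John--Nirenberg, Lemma \ref{Feuto, Lemma 3.2}). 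No estimate on the size of the averages $b_{B_j}$ is ever needed.

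A second, smaller issue: the assertion that $b\times f=\sum_j\lambda_j\,b\,a_j$ and that the identity $b\times f=\mathscr L_f(b)+\mathscr H_f(b)$ ``holds by construction'' conceals exactly the well-definedness problem the paper is at pains to repair in Feuto's argument. If $b\times f$ is \emph{defined} by the atomic series, one must prove convergence of $\sum_j\lambda_j b_{B_j}a_j$ in the target space and independence of the chosen decomposition. The paper instead defines $b\times f$ intrinsically by $\langle b\times f,\phi\rangle:=\langle\phi b,f\rangle$, which requires first showing that test functions are pointwise multipliers of $BMO(\X)$ (Proposition \ref{multipliers for bmo}); then the absolute convergence of $\mathscr L_f(b)$ in $L^1$ forces $\sum_j\lambda_j b_{B_j}a_j$ to converge in $(\G^{\epsilon}_0(\beta,\gamma))'$ to $b\times f-\mathscr L_f(b)$, with no independence issue left to check. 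Your closing suggestion to ``match against the intrinsic distributional definition'' is the right fix, but it presupposes the multiplier proposition, which your write-up does not supply.
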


Here the weight $d\nu(x)=d\mu(x)/\log(e+ d(x_0,x))$ with $x_0\in \X$ and the Orlicz function $\wp$ is as in Theorem A. It should be pointed out that in \cite{Feu}, for $f=\sum_{j=1}^{\infty} \lambda_j a_j$,  the author  defined the distribution $b\times f$ as 
\begin{equation}\label{Feuto by Bonami}
b \times f:= \sum_{j=1}^\infty \lambda_j (b-b_{B_j})a_j + \sum_{j=1}^\infty \lambda_j b_{B_j} a_j
\end{equation}
by proving that the second series is convergent in $H^{\wp}(\X,d\nu)$. This is made possible by the fact that $H^{\wp}(\X,d\nu)$ is complete and is continuously imbedded into the space of distributions $(\G^{\epsilon}_0(\beta,\gamma))'$ (see Section 2), which is not established in \cite{Feu}. Moreover one has to prove that Definition (\ref{Feuto by Bonami}) does not depend on the atomic decomposition of $f$. In this paper, we give a definition for the distribution $b\times f$ (see Section 3) which is similar to that of Bonami-Iwaniec-Jones-Zinsmeister. 

Our first main result can be read as follows.

\begin{theorem}\label{the first main theorem}
For each $f\in H^1(\X)$, there exist two bounded linear operators  ${\mathscr L}_f: BMO(\X) \to L^1(\X)$ and ${\mathscr H}_f: BMO(\X) \to H^{\log}(\X)$ such that for every $b\in BMO(\X)$,
$$b\times f = {\mathscr L}_f(b) + {\mathscr H}_f(b).$$
\end{theorem}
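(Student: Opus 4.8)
The plan is to decompose the product $b\times f$ using the atomic decomposition of $f\in H^1(\X)$ and the oscillation of $b$ relative to the atoms' supporting balls, following the template of Bonami-Iwaniec-Jones-Zinsmeister. Writing $f=\sum_j \lambda_j a_j$ with $a_j$ a $(1,\infty)$-atom supported on a ball $B_j$ and $\sum_j|\lambda_j|\lsim \|f\|_{H^1(\X)}$, I would split each local product pointwise as $(b-b_{B_j})a_j + b_{B_j}a_j$. The first piece is where I would define $\mathscr L_f(b)$, and the second piece is where I would define $\mathscr H_f(b)$. The key observation is that $(b-b_{B_j})a_j$ is supported on $B_j$ and, by the John-Nirenberg inequality on spaces of homogeneous type, satisfies $\|(b-b_{B_j})a_j\|_{L^1} \lsim \|b\|_{BMO(\X)}$ uniformly in $j$; summing against $\lambda_j$ gives an $L^1(\X)$ function with norm controlled by $\|b\|_{BMO(\X)}\|f\|_{H^1(\X)}$, hence a bounded operator $\mathscr L_f$.

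Next I would treat $\mathscr H_f(b):=\sum_j \lambda_j b_{B_j}a_j$. The heuristic is that $b_{B_j}a_j$ is a multiple of an atom, but the multiplier $b_{B_j}$ can grow like $\log$ of the radius/center, which is exactly what the space $H^{\log}(\X)$ is designed to absorb. The plan is to show the partial sums converge in $H^{\log}(\X)$ and that the limit has $H^{\log}(\X)$-norm bounded by $\|b\|_{BMO(\X)}\|f\|_{H^1(\X)}$. For this I would invoke the atomic (or maximal-function) characterization of $H^{\log}(\X)$ together with the completeness of $H^{\log}(\X)$ and its continuous embedding into the distribution space $(\G^\epsilon_0(\beta,\gamma))'$, as flagged in the excerpt; these furnish both convergence and a well-defined limit. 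The growth of $|b_{B_j}|$ is estimated by the standard telescoping bound $|b_{B_j}-b_{B_0}|\lsim \|b\|_{BMO(\X)}\log\!\big(e+ d(x_0,x)+r_{B_j}\big)$, which matches the Musielak-Orlicz weight built into $H^{\log}(\X)$.

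The two operators are manifestly linear in $b$, and their boundedness follows from the two norm estimates above, so $b\times f = \mathscr L_f(b)+\mathscr H_f(b)$ as distributions once the defining sum is shown to be independent of the atomic decomposition chosen for $f$. To establish independence, I would compare two decompositions, note that their difference is a finite or $H^1$-convergent combination summing to zero, and verify that each defining expression respects this cancellation because both $\mathscr L_f$ and $\mathscr H_f$ are built from genuinely bilinear (in $b$ and the atom data) expressions that pass to the limit.

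The main obstacle I expect is the $H^{\log}(\X)$ estimate for $\mathscr H_f(b)$: unlike the $L^1$ piece, it requires controlling the interaction between the $BMO$ growth of $b_{B_j}$ and the Musielak-Orlicz structure, and it requires a genuine $H^{\log}(\X)$ atomic/maximal argument rather than a pointwise bound, together with a careful convergence argument in a space whose ``norm'' is only a quasi-norm. Verifying that the series converges in $H^{\log}(\X)$ (and not merely in distributions) is the crux, and it is where the completeness and embedding facts emphasized in the introduction will do the decisive work.
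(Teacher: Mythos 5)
Your decomposition and your treatment of $\mathscr L_f$ coincide with the paper's: it writes $f=\sum_j\lambda_j a_j$ with $(H^1,\infty)$-atoms, sets $\mathscr L_f(b)=\sum_j\lambda_j(b-b_{B_j})a_j$ and $\mathscr H_f(b)=\sum_j\lambda_j b_{B_j}a_j$, and the $L^1$ bound is exactly your John--Nirenberg estimate. The divergence --- and the gap --- is in the crux you yourself identify, namely the $H^{\log}(\X)$ bound for $\mathscr H_f(b)$. Your plan is to treat each $b_{B_j}a_j$ as a logarithmically large multiple of an $H^{\log}$-atom and then sum, invoking an atomic characterization of $H^{\log}(\X)$, its completeness, and its continuous embedding into $(\G^{\epsilon}_0(\beta,\gamma))'$ ``as flagged in the excerpt.'' But the introduction flags those facts as precisely what is \emph{not} established in the prior literature (they are the holes in Feuto's argument that motivate this paper); no atomic theory of $H^{\log}(\X)$ on RD-spaces is stated or proved here, and you do not supply one. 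As written, your crux step rests entirely on unproven infrastructure. (The independence-of-decomposition worry, by contrast, is moot: the paper defines $b\times f$ intrinsically by duality, $\langle b\times f,\phi\rangle=\langle\phi b,f\rangle$, via the pointwise-multiplier property of test functions on $BMO(\X)$, and the theorem only asserts the existence of \emph{some} bounded pair of operators, so one may fix one atomic decomposition per $f$.)

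The paper closes the crux by a more elementary route that avoids any convergence question in $H^{\log}(\X)$. Since $\mathscr H_f(b)=b\times f-\mathscr L_f(b)$ is already a well-defined distribution, one only has to check that $\M(\mathscr H_f(b))\in L^{\log}(\X)$. By sublinearity of the grand maximal function and the pointwise bound $|b_{B_j}|\leq|b-b_{B_j}|+|b|$,
$$\M(\mathscr H_f(b))\leq\sum_j|\lambda_j|\,|b-b_{B_j}|\,\M(a_j)+|b|\sum_j|\lambda_j|\M(a_j).$$
The first sum lies in $L^1$ by Lemma \ref{Feuto, Lemma 3.1}, i.e. $\|(b-b_{B_j})\M(a_j)\|_{L^1}\leq C\|b\|_{BMO}$, and the second is handled by the product estimate $\|bh\|_{L^{\log}}\leq C\|h\|_{L^1}\|b\|_{BMO^+}$ of Proposition \ref{key lemma for Orlicz functions}, itself a consequence of the exponential integrability of $BMO$ functions and an elementary Orlicz inequality, applied with $h=\sum_j|\lambda_j|\M(a_j)\in L^1(\X)$. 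If you want to salvage your route you would first have to prove the atomic decomposition, completeness, and embedding results for $H^{\log}(\X)$; the paper's argument shows you can dispense with all three.
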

 
Here $H^{\log}(\X)$ is the {\sl Musielak-Orlicz Hardy space} related to the Musielak-Orlicz function $\varphi(x,t)= \frac{t}{\log(e+ d(x_0,x)) + \log(e+t)}$ (see Section 2). Theorem \ref{the first main theorem} is an improvement of Theorem B since $H^{\log}(\X)$ is a proper subspace of $H^{\wp}(\X,d\nu)$.

Let $\rho$ be an {\sl admissible function} (see Section 2). Recently, Yang and Zhou \cite{YYZ, YZ} introduced and studied Hardy spaces and Morrey-Campanato spaces related to the function $\rho$. There, they established that $BMO_\rho(\X)$ is the dual space of $H^1_\rho(\X)$.  Similar to the classical case, we can define the product of functions $b\in BMO_\rho(\X)$ and $f\in H^1_\rho(\X)$ as distributions $b\times f \in (\G^{\epsilon}_0(\beta,\gamma))'$. 

Our next main result is as follows.

\begin{theorem}\label{the second main theorem}
For each $f\in H^1_\rho(\X)$, there exist two bounded linear operators  ${\mathscr L}_{\rho, f}: BMO_\rho(\X) \to L^1(\X)$ and ${\mathscr H}_{\rho, f}: BMO_\rho(\X) \to H^{\log}(\X)$ such that for every $b\in BMO_\rho(\X)$,
$$b\times f = {\mathscr L}_{\rho, f}(b) + {\mathscr H}_{\rho, f}(b).$$
\end{theorem}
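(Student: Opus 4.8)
The plan is to reduce Theorem \ref{the second main theorem} to Theorem \ref{the first main theorem} by exploiting the atomic structure of $H^1_\rho(\X)$. Recall that a $\rho$-atom is supported in a ball $B(x,r)$ with the usual size normalization, but is required to satisfy the cancellation condition $\int a\,d\mu = 0$ only when $r < \rho(x)$; for balls with $r \geq \rho(x)$ no cancellation is imposed. Given an atomic decomposition $f = \sum_j \lambda_j a_j$ with $\sum_j |\lambda_j| \lesssim \|f\|_{H^1_\rho(\X)}$, I would split the index set into those $j$ for which $a_j$ is cancellative, setting $f_1 = \sum_{j\in J_1}\lambda_j a_j$, and those for which $a_j$ is a non-cancellative large-ball atom, setting $f_2 = \sum_{j\in J_2}\lambda_j a_j$. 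The key structural point is that every cancellative $\rho$-atom is a classical atom, so $f_1 \in H^1(\X)$ with $\|f_1\|_{H^1(\X)} \lesssim \sum_{j\in J_1}|\lambda_j| \lesssim \|f\|_{H^1_\rho(\X)}$; hence Theorem \ref{the first main theorem} applies to $f_1$ directly, while $b\times f_2$ will be treated as a genuine $L^1$ product.

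With this splitting I would define
$$\mathscr{L}_{\rho,f}(b) := \mathscr{L}_{f_1}(b) + b\, f_2, \qquad \mathscr{H}_{\rho,f}(b) := \mathscr{H}_{f_1}(b),$$
where $\mathscr{L}_{f_1}$ and $\mathscr{H}_{f_1}$ are the operators produced by Theorem \ref{the first main theorem} for $f_1$. Both assignments are linear in $b$. Since the continuous inclusion $H^1(\X) \hookrightarrow H^1_\rho(\X)$ dualizes to $BMO_\rho(\X) \hookrightarrow BMO(\X)$, every $b \in BMO_\rho(\X)$ lies in $BMO(\X)$ with $\|b\|_{BMO(\X)} \lesssim \|b\|_{BMO_\rho(\X)}$. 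Theorem \ref{the first main theorem} then yields $\|\mathscr{H}_{f_1}(b)\|_{H^{\log}(\X)} \lesssim \|b\|_{BMO(\X)}\|f_1\|_{H^1(\X)} \lesssim \|b\|_{BMO_\rho(\X)}\|f\|_{H^1_\rho(\X)}$, and the same chain of inequalities bounds $\mathscr{L}_{f_1}(b)$ in $L^1(\X)$.

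The genuinely new estimate is the $L^1$ bound for $b\,f_2$, and here I would invoke the defining property of $BMO_\rho(\X)$ on large balls. For a non-cancellative atom $a_j$ supported in $B_j = B(x_j, r_j)$ with $r_j \geq \rho(x_j)$, H\"older's inequality combined with the large-ball control $\frac{1}{\mu(B_j)}\int_{B_j}|b|\,d\mu \lesssim \|b\|_{BMO_\rho(\X)}$ gives $\|b\,a_j\|_{L^1(\X)} \lesssim \|b\|_{BMO_\rho(\X)}$. Summing against $\sum_{j\in J_2}|\lambda_j| \lesssim \|f\|_{H^1_\rho(\X)}$ shows that $b\,f_2 = \sum_{j\in J_2}\lambda_j\, b\,a_j$ converges in $L^1(\X)$ with the desired bound, so $\mathscr{L}_{\rho,f}$ indeed maps $BMO_\rho(\X)$ into $L^1(\X)$.

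It remains to verify the identity $b\times f = \mathscr{L}_{\rho,f}(b) + \mathscr{H}_{\rho,f}(b)$. Recombining the two pieces and using the bilinearity of the distributional product, one has $b\times f = b\times f_1 + b\times f_2$; the first term decomposes through Theorem \ref{the first main theorem} and the second coincides with the $L^1$ function $b\,f_2$. The main obstacle I anticipate is exactly this consistency step: one must check that for the non-cancellative part the distributional product $b\times f_2$ agrees with the pointwise product $b\,f_2$, and that the overall construction does not depend on the particular atomic decomposition chosen. Both facts rest on the definition of $b\times f$ given in Section 3 together with a density argument, and they demand some care because the splitting $f = f_1 + f_2$ is not canonical.
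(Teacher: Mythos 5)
Your proposal is correct and follows essentially the same route as the paper: the same splitting of the atomic decomposition into cancellative atoms ($f_1\in H^1(\X)$) and non-cancellative large-ball atoms ($f_2$), the same definitions ${\mathscr L}_{\rho,f}(b)={\mathscr L}_{f_1}(b)+bf_2$ and ${\mathscr H}_{\rho,f}(b)={\mathscr H}_{f_1}(b)$, and the same $L^1$ estimate for $bf_2$ via the large-ball average control in the $BMO_\rho$ norm. The only cosmetic difference is that the paper invokes the inequality $\|b\|_{BMO^+}\leq C\log(\rho(x_0)+1/\rho(x_0))\|b\|_{BMO_\rho}$ to pass from the $\|b\|_{BMO^+}$ bound of Theorem \ref{the first main theorem} to $\|b\|_{BMO_\rho}$, a step you should make explicit since Theorem \ref{the first main theorem} controls ${\mathscr H}_{f_1}(b)$ by $\|b\|_{BMO^+}$ rather than $\|b\|_{BMO}$.
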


When $\X\equiv\bR^n, n\geq 3,$ and $\rho(x)\equiv \sup\{r>0: \frac{1}{r^{n-2}}\int_{B(x,r)}V(y)dy\leq 1\}$, where $L= -\Delta +V$ is as in Theorem A, one has $BMO_\rho(\X)\equiv BMO_L(\bR^n)$ and $H^1_\rho(\X)\equiv H^1_L(\bR^n)$.
So, Theorem \ref{the second main theorem} is an improvement of Theorem A since $H^{\log}(\bR^n)$ is a proper subspace of $H^{\wp}_L(\bR^n, d\nu)$ (see \cite{Ky2}).

The following conjecture is suggested by A. Bonami and F. Bernicot.

\begin{conjecture}
There exist two bounded bilinear operators $\mathscr L: BMO(\X)\times H^1(\X) \to L^1(\X)$ and $\mathscr H: BMO(\X)\times H^1(\X)\to H^{\log}(\X)$ such that 
$$b\times f = \mathscr L(b,f) + \mathscr H(b,f).$$
\end{conjecture}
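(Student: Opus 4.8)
The plan is to upgrade Theorem~\ref{the first main theorem} to a \emph{bilinear} statement, and the whole difficulty is the bilinearity: the operators $\mathscr L_f,\mathscr H_f$ produced there are built from an atomic decomposition $f=\sum_j\lambda_j a_j$, which is linear in $b$ but depends on $f$ through a choice that is not linear. I would therefore discard the atomic decomposition in favour of a \emph{canonical, linear} resolution of both arguments. On an RD-space this is provided by the discrete Calder\'on reproducing formula of Han--M\"uller--Yang, giving a wavelet-type frame $\{\psi_Q\}$ indexed by the system of dyadic-type cubes of $\X$; the coefficient maps $b\mapsto\{\la b,\psi_Q\ra\}$ and $f\mapsto\{\la f,\psi_Q\ra\}$ are linear and characterize $BMO(\X)$ and $H^1(\X)$ by the usual Carleson-measure and discrete square-function conditions. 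Writing $b_Q=\la b,\psi_Q\ra$ and $f_Q=\la f,\psi_Q\ra$, the product expands as $bf=\sum_{Q,Q'}b_Qf_{Q'}\,\psi_Q\psi_{Q'}$, and every operator I build from this expansion will be bilinear in $(b,f)$ automatically.

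I would organize this expansion into paraproducts $bf=\Pi_1(b,f)+\Pi_2(b,f)+\Pi_3(b,f)$, according to whether the scale of $Q$ is much smaller than, much larger than, or comparable to that of $Q'$. The two terms carrying cancellation at the fine scale --- the diagonal term $\Pi_3$ and the term $\Pi_2$ in which $f$ lives at the coarse scale while $b$ oscillates --- are the exact frame analogues of the integrable piece $\sum_j\lambda_j(b-b_{B_j})a_j$ of Theorem~\ref{the first main theorem}, and I would prove
\[
\|\Pi_2(b,f)+\Pi_3(b,f)\|_{L^1(\X)}\lesssim\|b\|_{BMO(\X)}\|f\|_{H^1(\X)}
\]
from the John--Nirenberg inequality on $\X$ together with the square-function description of $H^1(\X)$, and set $\mathscr L(b,f):=\Pi_2(b,f)+\Pi_3(b,f)$. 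The remaining low--high paraproduct $\Pi_1(b,f)$, in which $b$ is replaced near each fine cube $Q'$ by a local average $b^{\,\ast}_{Q'}$ and then multiplied by $f_{Q'}\psi_{Q'}$, is the analogue of $\sum_j\lambda_j b_{B_j}a_j$; I would set $\mathscr H(b,f):=\Pi_1(b,f)$. It then remains to check that $\mathscr L(b,f)+\mathscr H(b,f)$ coincides, as an element of $(\G^\epsilon_0(\beta,\gamma))'$, with the distribution $b\times f$ defined in Section~3, which follows by testing both against $\G^\epsilon_0(\beta,\gamma)$ and using the reproducing formula.

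For the bound $\mathscr H\colon BMO(\X)\times H^1(\X)\to H^{\log}(\X)$ I would invoke the Musielak--Orlicz atomic characterization of $H^{\log}(\X)$: it suffices to write $\Pi_1(b,f)=\sum_j c_j a_j$ with each $a_j$ a $\varphi$-atom on a ball $B_j$ comparable to a cube $Q'$ and with coefficients of size $|c_j|\lesssim|b^{\,\ast}_{Q'}|\,|f_{Q'}|$, and then to estimate the associated Luxemburg functional of $\varphi$. Passing to that functional reduces the whole matter to a generalized H\"older inequality for the Musielak--Orlicz function, $st\lesssim\varphi(x,s)+\varphi^{*}(x,t)$, applied with the $BMO$ average $|b^{\,\ast}_{Q'}|\lesssim\|b\|_{BMO(\X)}\log(e+d(x_0,\cdot))$ in one slot and the $H^1$ square-function data $\{f_{Q'}\}$ in the other.

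The main obstacle is exactly this last estimate. Since $\varphi(x,t)=t/\bigl(\log(e+d(x_0,x))+\log(e+t)\bigr)$ couples the spatial variable with the amplitude, scales cannot be decoupled as in an ordinary Orlicz space, and the logarithmic --- hence unbounded --- growth of the local averages of a $BMO$ function must be absorbed \emph{precisely} by the spatial weight $\log(e+d(x_0,x))$ in the denominator of $\varphi$. Making this cancellation quantitative on an RD-space, where $\mu(B(x,r))$ is controlled only through the doubling and reverse-doubling exponents, is where the real work lies; it is also the step that pins the target at $H^{\log}(\X)$ rather than the larger $H^{\wp}(\X,d\nu)$, and thus what makes the bilinear conclusion genuinely stronger than Theorems~A and~B.
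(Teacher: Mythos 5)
You have attempted to prove a statement that the paper itself records as an open \emph{conjecture} (attributed to Bonami and Bernicot): the paper offers no proof of it, proving instead only the weaker Theorem \ref{the first main theorem}, in which the operators $\mathscr L_f,\mathscr H_f$ are linear in $b$ but depend nonlinearly on $f$ through a chosen atomic decomposition. So there is no argument in the paper to compare yours against, and the real question is whether your text constitutes a proof. It does not: it is a program, modeled on the Bonami--Grellier--Ky wavelet resolution of the Euclidean case (which the paper cites as \cite{BGK}), and by your own admission the decisive step --- the bound $\|\Pi_1(b,f)\|_{H^{\log}(\X)}\lesssim\|b\|_{BMO(\X)}\|f\|_{H^1(\X)}$ for the low--high paraproduct --- is ``where the real work lies'' and is nowhere carried out. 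That estimate is not a technical loose end; it \emph{is} the content of the conjecture. The pieces you do sketch ($L^1$ bounds for the cancellative paraproducts, identification with $b\times f$ by testing against $\G^\epsilon_0(\beta,\gamma)$) are the analogues of the easy half of the paper's Theorem \ref{the first main theorem}, where the integrable part $\sum_j\lambda_j(b-b_{B_j})a_j$ is likewise the routine term.

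Beyond the missing central estimate, the program has concrete obstructions as written. The Han--M\"uller--Yang discrete Calder\'on reproducing formula furnishes a \emph{frame} with dual families, not an orthonormal basis of compactly supported wavelets; the Euclidean argument of \cite{BGK} leans heavily on compact supports and orthogonality to control the products $\psi_Q\psi_{Q'}$ (in particular to recombine same-scale products into atoms), and frame elements on an RD-space have only polynomial decay, so the expansion $bf=\sum_{Q,Q'}b_Qf_{Q'}\psi_Q\psi_{Q'}$ is not available verbatim: the pointwise product $bf$ is not even defined for $b\in BMO(\X)$, $f\in H^1(\X)$, and you do not say in which topology the double series converges, nor why its sum agrees with the distribution defined by (\ref{distribution definition for products}). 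Moreover your key pointwise input is misstated: local averages of a $BMO$ function over a ball $B(x,r)$ obey $|b_{B(x,r)}-b_{B(x_0,1)}|\lesssim\|b\|_{BMO}\bigl(\log(e+d(x_0,x)+r)+\log(e+1/r)\bigr)$, and the small-scale term $\log(e+1/r)$, absent from your bound $|b^{\ast}_{Q'}|\lesssim\|b\|_{BMO}\log(e+d(x_0,\cdot))$, is exactly what must be absorbed by the amplitude factor $\log(e+t)$ in $\varphi(x,t)$; handling that coupling between coefficient size and scale uniformly over the frame, with only doubling and reverse-doubling control of $\mu(B(x,r))$, is an open problem here, not a checkable step. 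In short: plausible roadmap, correct identification of the difficulty, but no proof.
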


It should be pointed out that when $\X=\bR^n$ and $H^{\log}(\X)$ is replaced by $H^{\wp}(\bR^n, d\nu)$, the above conjecture is just Conjecture 1.7 of \cite{BIJZ}, which answered recently by Bonami, Grellier and Ky \cite{BGK} (see also \cite{Ky2}). 

Throughout the whole paper, $C$ denotes a positive geometric constant which is independent of the main parameters, but may change from line to line. We write $f\sim g$ if there exists a constant $C>1$ such that $C^{-1}f\leq g\leq C f$.

The paper is organized as follows. In Section 2, we present some notations and preliminaries about $BMO$ type spaces and Hardy type spaces on RD-spaces. Section 3 is devoted to prove Theorem \ref{the first main theorem}. Finally, we give the proof for Theorem \ref{the second main theorem} in Section 4.

{\bf Acknowledgements.} The author would like to thank  Aline Bonami, Sandrine Grellier, Dachun Yang and Fr\'ed\'eric Bernicot for very useful suggestions.

\section{Some preliminaries and notations}

 Let $d$ be a quasi-metric on a set $\X$, that is, $d$ is a nonnegative function on $\mathcal X\times \mathcal X$ satisfying
\begin{enumerate}[(a)]
\item $d(x,y)=d(y,x)$,
\item $d(x,y)>0$ if and only if $x\ne y$,
\item there exists a constant $\kappa\geq 1$ such that for all $x,y,z\in \mathcal X$,
\begin{equation}
d(x,z)\leq \kappa(d(x,y)+ d(y,z)).
\end{equation}
\end{enumerate}
A trip $(\mathcal X, d,\mu)$ is called a {\sl space of homogeneous type} in the sense of Coifman-Weiss \cite{CW} if $\mu$ is a regular Borel measure satisfying {\sl doubling property}, i.e. there exists a constant $C>1$ such that for all $x\in \mathcal X$ and $r>0$,
$$\mu(B(x,2r))\leq C \mu(B(x,r)).$$

Following Han, M\"uller and Yang \cite{HMY}, $(\mathcal X, d,\mu)$ is called an {\sl RD-space} if $(\mathcal X, d,\mu)$ is a space of homogeneous type and $\mu$ also satisfies {\sl reverse doubling property}, i.e. there exists a constant $C>1$ such that for all $x\in \mathcal X$ and $r>0$,
$$\mu(B(x,2r))\geq C \mu(B(x,r)).$$

Set $\mbox{diam}(\mathcal X) := \sup_{x,y\in\mathcal X} d(x,y)$. It should be pointed out that $(\mathcal X, d,\mu)$ is an RD-space if and only if  there exist constants $0<\mathfrak d \leq \mathfrak n$ and $C> 1$ such that for all $x\in\mathcal X$, $0<r<\mbox{diam}(\mathcal X)/2$, and $1\leq \lambda < \mbox{diam}(\mathcal X)/(2r)$,
\begin{equation}\label{RD-spaces}
C^{-1} \lambda^{\mathfrak d} \mu(B(x,r)) \leq \mu(B(x,\lambda r))\leq C \lambda^{\mathfrak n} \mu(B(x,r)).
\end{equation}

 Here and what in follows, for $x, y\in\X$ and $r>0$, we denote $V_r(x):= \mu(B(x,r))$ and $V(x,y):= \mu(B(x,d(x,y)))$.

\begin{definition}\label{definition for test functions}
Let $x_0\in\mathcal X$, $r>0$, $0<\beta\leq 1$ and $\gamma >0$. A function $f$ is said to belong to the space of test functions, $\mathcal G(x_0,r,\beta,\gamma)$, if there exists a positive constant $C_f$ such that
\begin{enumerate}[(i)]
\item $|f(x)| \leq C_f \frac{1}{V_r(x_0) + V(x_0,x)}\Big(\frac{r}{r+ d(x_0,x)}\Big)^\gamma$ for all $x\in\mathcal X$;

\item $|f(x) - f(y)|\leq C_f  \Big(\frac{d(x,y)}{r+ d(x_0,x)}\Big)^\beta \frac{1}{V_r(x_0) + V(x_0,x)}\Big(\frac{r}{r+ d(x_0,x)}\Big)^\gamma$ for all $x,y\in \mathcal X$ satisfying that $d(x,y)\leq \frac{r + d(x_0,x)}{2\kappa}$.
\end{enumerate}
For any $f\in \mathcal G(x_0,r,\beta,\gamma)$, we define 
$$\|f\|_{\mathcal G(x_0,r,\beta,\gamma)}:= \inf \{C_f: (i) \; \mbox{and} \;(ii) \;\mbox{hold}\}.$$
\end{definition}

Let $\rho$ be a positive function on $\X$. Following Yang and Zhou \cite{YZ}, the function $\rho$ is said to {\sl be admissible} if there exist positive constants $C_0$ and $k_0$ such that for all $x,y\in \X$,
$$\rho(y)\leq C_0 [\rho(x)]^{1/(1+k_0)} [\rho(x)+d(x,y)]^{k_0/(1+k_0)}.$$

{\sl Throughout the whole paper}, we always assume that $\mathcal X$ is an RD-space with $\mu(\mathcal X)=\infty$, and $\rho$ is an admissible function on $\X$. Also we fix $x_0\in \X$.

In Definition \ref{definition for test functions}, it is easy to see that $\mathcal G(x_0,1,\beta,\gamma)$ is a Banach space.  For simplicity, we write $\mathcal G(\beta,\gamma)$ instead of $\mathcal G(x_0,1,\beta,\gamma)$. Let $\epsilon\in (0,1]$ and $\beta,\gamma\in (0,\epsilon]$, we define the space $\mathcal G^\epsilon_0(\beta,\gamma)$ to be the completion of $\mathcal G(\epsilon,\epsilon)$ in $\mathcal G(\beta,\gamma)$, and denote by $(\mathcal G^\epsilon_0(\beta,\gamma))'$ the space of all continuous linear functionals on $\mathcal G^\epsilon_0(\beta,\gamma)$. We say that $f$ is a {\sl distribution} if $f$ belongs to $(\mathcal G^\epsilon_0(\beta,\gamma))'$.

Remark that, for any  $x\in \mathcal X$ and $r>0$, one has $\mathcal G(x,r,\beta,\gamma)= \mathcal G(x_0,1,\beta,\gamma)$  with equivalent norms, but of course the constants are depending on $x$ and $r$.

Let $f$ be a distribution in $(\mathcal G^\epsilon_0(\beta,\gamma))'$. We define {\sl the  grand maximal functions} $\M(f)$ and  $\M_\rho(f)$ as following
$$\M(f)(x) := \sup\{|\langle f,\varphi \rangle|: \varphi\in \mathcal G^\epsilon_0(\beta,\gamma), \|\varphi\|_{\mathcal G(x,r,\beta,\gamma)}\leq 1\; \mbox{for some}\; r>0\},$$
$$\M_\rho(f)(x) := \sup\{|\langle f,\varphi \rangle|: \varphi\in \mathcal G^\epsilon_0(\beta,\gamma), \|\varphi\|_{\mathcal G(x,r,\beta,\gamma)}\leq 1\; \mbox{for some}\; r\in (0,\rho(x))\}.$$

Let $L^{\log}(\X)$ (see \cite{BGK, Ky1} for details) be the Musielak-Orlicz type space of $\mu$-measurable functions $f$ such that
$$\int_{\X} \frac{|f(x)|}{\log(e+|f(x)|) +\log(e + d(x_0,x))} d\mu(x)<\infty.$$
For $f\in L^{\log}(\X)$, we define the "norm" of $f$ as
$$\|f\|_{L^{\log}}=\inf\left\{ \lambda>0:  \int_{\X} \frac{\frac{|f(x)|}{\lambda}}{\log(e+\frac{|f(x)|}{\lambda}) +\log(e + d(x_0,x))} d\mu(x)\leq 1\right\}.$$

\begin{definition}
Let $\epsilon\in (0,1)$ and $\beta,\gamma\in (0,\epsilon)$.
\begin{enumerate}[(i)]
\item The Hardy space $H^1(\mathcal X)$ is defined by
$$H^1(\mathcal X) = \{f\in (\mathcal G^\epsilon_0(\beta,\gamma))': \|f\|_{H^1}:= \|\M( f)\|_{L^1}<\infty \}.$$
\item The Hardy space $H^1_\rho(\mathcal X)$ is defined by
$$H^1_\rho(\mathcal X) = \{f\in (\mathcal G^\epsilon_0(\beta,\gamma))': \|f\|_{H^1_\rho}:= \|\M_\rho( f)\|_{L^1}<\infty \}.$$
\item The Hardy space $H^{\log}(\mathcal X)$ is defined by
$$H^{\log}(\mathcal X) = \{f\in (\mathcal G^\epsilon_0(\beta,\gamma))': \|f\|_{H^{\log}}:= \|\M( f)\|_{L^{\log}}<\infty \}.$$
\end{enumerate}
\end{definition}

It is clear that $H^1(\X) \subset H^1_{\rho}(\X)$ and $H^1(\X)\subset H^{\log}(\X)$ with the inclusions are continuous. It should be pointed out that the Musielak-Orlicz Hardy space $H^{\log}(\X)$ is a proper subspace of the weighted Hardy-Orlicz space $\H^{\wp}(\X, \nu)$ studied in \cite{Feu}. We refer to \cite{Ky1} for an introduction to Musielak-Orlicz Hardy spaces on the Euclidean space $\bR^n$.

\begin{definition}
Let $q\in (1,\infty]$.
\begin{enumerate}[(i)]
\item A measurable function $ \mathfrak a$ is called an $(H^1,q)$-atom related to the ball $B(x,r)$ if 
\begin{enumerate}[(a)]
\item supp $\a\subset B(x,r)$,
\item $\|\a\|_{L^q}\leq (V_r(x))^{1/q-1}$,
\item $\int_{\mathcal X} \a(y) d\mu(y)=0$.
\end{enumerate}

\item A measurable function $\mathfrak a$ is called an $(H^1_\rho,q)$-atom related to the ball $B(x,r)$ if $r \leq 2\rho(x)$ and $\mathfrak a$ satisfies (a) and (b), and when $r < \rho(x)$, $\mathfrak a$ also satisfies (c).

\end{enumerate}

\end{definition}

The following results were established in \cite{GLY, YZ}.

\begin{Theorem}\label{atomic decomposition}
Let  $\epsilon\in (0,1)$, $\beta,\gamma\in (0,\epsilon)$ and $q\in(1,\infty]$. Then, we have:
\begin{enumerate}[(i)]
\item The space $H^1(\mathcal X)$ coincides with the Hardy space $H^{1,q}_{\rm at}(\mathcal X)$ of Coifman-Weiss. More precisely, $f\in H^1(\X)$ if and only if $f$ can be written as  $f= \sum_{j=1}^\infty \lambda_j a_j$ where the $a_j$'s are $(H^1,q)$-atoms and $\{\lambda_j\}_{j=1}^\infty\in\ell^1$. Moreover,
$$\|f\|_{H^1}\sim \inf \left\{\sum_{j=1}^\infty |\lambda_j| : f= \sum_{j=1}^\infty \lambda_j a_j\right\}.$$
\item $f\in H^1_\rho(\X)$ if and only if $f$ can be written as  $f= \sum_{j=1}^\infty \lambda_j a_j$ where the $a_j$'s are $(H^1_\rho,q)$-atoms and $\{\lambda_j\}_{j=1}^\infty\in\ell^1$. Moreover,
$$\|f\|_{H^1_\rho}\sim \inf \left\{\sum_{j=1}^\infty |\lambda_j| : f= \sum_{j=1}^\infty \lambda_j a_j\right\}.$$

\end{enumerate}

\end{Theorem}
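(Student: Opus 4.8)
The plan is to establish each part as the equivalence between the grand-maximal-function definition and the atomic decomposition, treating the two inclusions separately. For part (i) I would first dispose of the elementary direction: every $(H^1,q)$-atom $a$ supported in a ball $B=B(x_B,r_B)$ satisfies $\|\M(a)\|_{L^1}\le C$ with $C$ independent of $a$. To see this, split $\int_\X\M(a)\,d\mu$ into the contribution of the dilated ball $B(x_B,2\kappa r_B)$ and that of its complement. On $B(x_B,2\kappa r_B)$ one combines the $L^q$-boundedness of the grand maximal operator with H\"older's inequality and the size bound (b); on the complement one uses the cancellation (c) together with the H\"older regularity (ii) of Definition~\ref{definition for test functions} to obtain the pointwise decay $\M(a)(x)\lesssim (r_B/d(x_B,x))^\beta/(V_{r_B}(x_B)+V(x_B,x))$ for $d(x_B,x)\ge 2\kappa r_B$, which is integrable by the doubling property. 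Subadditivity of $\M$ then yields $\|\sum_j\lambda_j a_j\|_{H^1}\le C\sum_j|\lambda_j|$ for any atomic sum, giving $H^{1,q}_{\rm at}(\X)\ss H^1(\X)$ with the asserted norm control.

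The substantial direction is the reverse inclusion: given $f$ with $\M(f)\in L^1$, I would produce an atomic decomposition by a Calder\'on--Zygmund decomposition performed at the level of the grand maximal function. For each $k\in\bZ$ set $\Omega_k=\{x\in\X:\M(f)(x)>2^k\}$; since $\M(f)\in L^1$, each $\Omega_k$ is open with finite measure. Cover $\Omega_k$ by a Whitney-type family of balls $\{B_{k,i}\}_i$ adapted to the quasi-metric, choose a subordinate partition of unity $\{\phi_{k,i}\}_i$ whose members lie in $\mathcal G(x_0,1,\beta,\gamma)$ with uniformly controlled norms, and set $b_{k,i}=(f-c_{k,i})\phi_{k,i}$, where the constant $c_{k,i}$ is fixed so that $\int_\X b_{k,i}\,d\mu=0$. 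After normalization each $b_{k,i}$ becomes a constant multiple $\lambda_{k,i}a_{k,i}$ of an $(H^1,q)$-atom, and the resulting series $f=\sum_{k,i}\lambda_{k,i}a_{k,i}$ converges in $(\mathcal G^\epsilon_0(\beta,\gamma))'$. One then estimates $\sum_{k,i}|\lambda_{k,i}|\lesssim\sum_k 2^k\mu(\Omega_k)\lesssim\|\M(f)\|_{L^1}$, which closes the proof and identifies $H^1(\X)$ with the Coifman--Weiss atomic space $H^{1,q}_{\rm at}(\X)$.

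For part (ii) I would run the same scheme with $\M_\rho$ replacing $\M$. The one structural difference is that the supremum defining $\M_\rho$ ranges only over radii $r\in(0,\rho(x))$, and this is exactly what permits $(H^1_\rho,q)$-atoms on balls of radius comparable to $\rho$ to omit the cancellation condition (c). The easy inclusion again reduces to a uniform bound $\|\M_\rho(a)\|_{L^1}\le C$: for a large atom, supported in $B(x_B,r_B)$ with $\rho(x_B)\le r_B\le 2\rho(x_B)$, the radius restriction in $\M_\rho$ keeps the competing test functions localized near the support, so the size bound (b) alone gives integrability; for a small atom, with $r_B<\rho(x_B)$, one repeats the cancellation-plus-regularity argument from part (i). In the hard direction the admissibility inequality $\rho(y)\le C_0[\rho(x)]^{1/(1+k_0)}[\rho(x)+d(x,y)]^{k_0/(1+k_0)}$ is invoked to ensure that $\rho$ oscillates slowly over each Whitney ball, so that the dichotomy between small and large balls is preserved by the decomposition and every piece it produces is a bona fide $(H^1_\rho,q)$-atom.

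The main obstacle is the reverse inclusion of part (i): carrying out the Calder\'on--Zygmund stopping-time argument purely in terms of the grand maximal function on a set equipped with only a quasi-metric and a doubling, reverse-doubling measure, with no ambient smooth or group structure to supply cut-offs. The two delicate points are the construction of a partition of unity whose members belong to $\mathcal G(x_0,1,\beta,\gamma)$ with uniformly bounded norms, and the verification that each localized, mean-corrected piece is genuinely an $L^q$-normalized atom; both rest essentially on the two-sided volume estimate \eqref{RD-spaces} and on the regularity encoded in Definition~\ref{definition for test functions}. This is precisely the program carried out in \cite{GLY, YZ}, from which the norm-equivalence constants can be read off.
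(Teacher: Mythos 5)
The paper does not prove this theorem at all: it is quoted verbatim from the references, with part (i) attributed to Grafakos--Liu--Yang \cite{GLY} and part (ii) to Yang--Zhou \cite{YZ}. Your sketch reproduces exactly the program carried out in those papers --- uniform $L^1$ bounds on $\M(a)$ (respectively $\M_\rho(a)$) for the easy inclusion, and a Calder\'on--Zygmund decomposition at the level of the grand maximal function for the hard one --- so there is no divergence of method to report, only the observation that you are re-deriving a cited black box. One step in your outline is compressed to the point of being inaccurate: the localized pieces $b_{k,i}=(f-c_{k,i})\phi_{k,i}$ at a single level $k$ are \emph{not} the atoms after normalization; in the Fefferman--Stein/Mac\'{\i}as--Segovia scheme the atoms arise from the differences $g_{k+1}-g_k$ of the good parts of consecutive levels, and it is this telescoping that simultaneously delivers the $L^\infty$ bound of order $2^k$, the cancellation, and the summability $\sum_{k,i}|\lambda_{k,i}|\lesssim\sum_k 2^k\mu(\Omega_k)\lesssim\|\M(f)\|_{L^1}$. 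Also, since $f$ is a priori only an element of $(\mathcal G^\epsilon_0(\beta,\gamma))'$, both the products $f\phi_{k,i}$ and the mean-correcting constants $c_{k,i}$ must be defined by duality, which is where most of the technical work in \cite{GLY} actually lives. As a blind reconstruction of the cited proof your outline is sound; as a self-contained argument it would need those two points filled in.
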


Here and what in follows, for any ball $B\subset \X$ and $g\in L^1_{\rm loc}(\X)$, we denote by $g_B$ the average value of $g$ over the ball $B$ and denote
$$MO(g,B):= \frac{1}{\mu(B)}\int_{B}|g(x) - g_B| d\mu(x).$$
Recall (see \cite{CW}) that a function $f\in L^1_{\rm loc}(\X)$ is said to be in $BMO(\X)$ if
$$\|f\|_{BMO}=\sup_{B} MO(f,B)<\infty,$$
where the supremum is taken all over balls $B\subset\X$.

\begin{definition}
Let $\rho$ be an admissible function and $\D:= \{B(x,r)\subset \X: r\geq \rho(x)\}$. A function $f\in L^1_{\rm loc}(\X)$ is said to be in $BMO_\rho(\X)$ if
$$\|f\|_{BMO_\rho}= \|f\|_{BMO} + \sup_{B\in \D}\frac{1}{\mu(B)}\int_B |f(x)| d\mu(x) <\infty.$$
\end{definition}

The following results are well-known, see \cite{CW, GLY, YYZ}.

\begin{theorem}
\begin{enumerate}[(i)]
\item The space $BMO(\X)$ is the dual space of $H^1(\X)$.
\item The space $BMO_\rho(\X)$ is the dual space of $H^1_\rho(\X)$.
\end{enumerate}
\end{theorem}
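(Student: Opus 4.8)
The plan is to establish both dualities by the classical Fefferman--Stein scheme, adapted to the homogeneous-type setting through the atomic decompositions of Theorem \ref{atomic decomposition} and the John--Nirenberg inequality, which holds on any space of homogeneous type (see \cite{CW}). Since (ii) is a localized variant of (i), I would prove them in parallel, establishing in each case the two continuous inclusions $BMO(\X)\hookrightarrow (H^1(\X))'$ and $(H^1(\X))'\hookrightarrow BMO(\X)$ (together with their $\rho$-analogues), which combine to give the asserted identification.

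For $BMO(\X)\hookrightarrow (H^1(\X))'$, fix $q\in(1,\infty)$ and $g\in BMO(\X)$, and define a candidate functional on finite linear combinations of $(H^1,q)$-atoms by $\ell_g(f)=\int_\X f\,g\,d\mu$. For a single atom $\a$ supported on a ball $B=B(x,r)$, the cancellation condition gives $\int_\X \a\,g\,d\mu=\int_B \a\,(g-g_B)\,d\mu$, so H\"older's inequality and the John--Nirenberg estimate $\|g-g_B\|_{L^{q'}(B)}\le C\,\mu(B)^{1/q'}\|g\|_{BMO}$ yield
$$\Big|\int_\X \a\,g\,d\mu\Big|\le \|\a\|_{L^q}\,\|g-g_B\|_{L^{q'}(B)}\le C\,V_r(x)^{1/q-1}\,\mu(B)^{1/q'}\|g\|_{BMO}=C\|g\|_{BMO},$$
since $\mu(B)=V_r(x)$. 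Summing over an atomic decomposition of $f$ and taking the infimum afforded by Theorem \ref{atomic decomposition} gives $|\ell_g(f)|\le C\|g\|_{BMO}\|f\|_{H^1}$; a density argument then extends $\ell_g$ to a bounded functional on $H^1(\X)$.

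For the reverse inclusion, take $\ell\in(H^1(\X))'$. Fixing a ball $B$, every $h\in L^q(B)$ with $\int_B h\,d\mu=0$ is, after normalization, a multiple of an $(H^1,q)$-atom, so $\ell$ restricts to a bounded functional on the mean-zero subspace of $L^q(B)$; by Hahn--Banach and the Riesz representation of $(L^q(B))'=L^{q'}(B)$ there is $\gamma_B\in L^{q'}(B)$, unique up to an additive constant, with $\ell(h)=\int_B h\,\gamma_B\,d\mu$ for all such $h$. Exhausting $\X$ by an increasing family of balls and matching the local representatives, which agree up to constants on overlaps, produces a single $g\in L^1_{\rm loc}(\X)$. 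To bound its norm, for a fixed ball $B$ I would test $\ell$ against $\a=\frac{1}{2\mu(B)}\big(\mathrm{sgn}(g-g_B)-(\mathrm{sgn}(g-g_B))_B\big)$, which is an $(H^1,\infty)$-atom; since $\a$ has mean zero one computes $\ell(\a)=\frac{1}{2\mu(B)}\int_B|g-g_B|\,d\mu=\tfrac12 MO(g,B)$, whence $MO(g,B)\le 2\|\ell\|\,\|\a\|_{H^1}\le C\|\ell\|$ and, taking the supremum over $B$, $\|g\|_{BMO}\le C\|\ell\|$.

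Part (ii) follows the same two steps with $(H^1_\rho,q)$-atoms; the only new ingredient is the role of the non-cancellation atoms supported on balls $B=B(x,r)\in\D$ with $\rho(x)\le r\le 2\rho(x)$. In the forward direction one estimates $|\int\a\,g\,d\mu|\le\|\a\|_{L^q}\|g\|_{L^{q'}(B)}$ and splits $\|g\|_{L^{q'}(B)}\le\|g-g_B\|_{L^{q'}(B)}+|g_B|\,\mu(B)^{1/q'}$, controlling the first term by John--Nirenberg and the second by the extra supremum $\sup_{B\in\D}\frac{1}{\mu(B)}\int_B|g|\,d\mu$ present in $\|g\|_{BMO_\rho}$; in the reverse direction one recovers exactly this supremum by testing $\ell$ against atoms supported on balls $B\in\D$ with $\|\a\|_{L^\infty}\le\mu(B)^{-1}$ and no cancellation. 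I expect the main obstacle to be the reverse inclusion: one must verify that the local representatives glue into a single globally defined $g$ and that the representation, a priori valid only on finite atomic sums, extends consistently to all of $H^1(\X)$ (resp. $H^1_\rho(\X)$), which is precisely where the completeness of the spaces and the independence of the pairing from the chosen atomic decomposition are used.
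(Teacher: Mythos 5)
The paper does not prove this theorem at all: it is quoted as a known result with references to Coifman--Weiss \cite{CW} for part (i) and to \cite{GLY, YYZ} for the localized variant (ii). So there is no in-paper argument to compare against; what you have written is a sketch of the standard Fefferman--Stein/Coifman--Weiss duality scheme, which is indeed the route taken in those references. Your forward direction (cancellation plus John--Nirenberg, giving $|\int \a g\,d\mu|\le C\|g\|_{BMO}$ uniformly over atoms) and your reverse direction (local $L^{q'}$ representatives via Hahn--Banach and Riesz, glued up to additive constants over an exhaustion, with the $BMO$ bound extracted by testing against the sign-function atom) are both correct in outline, and you rightly flag the classical subtlety that the pairing must be shown independent of the atomic decomposition before the density argument applies.

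There is, however, one concrete gap in your treatment of (ii). You propose to recover $\sup_{B\in\D}\frac{1}{\mu(B)}\int_B|g|\,d\mu$ by testing $\ell$ against non-cancellation atoms supported on arbitrary $B\in\D$. But by the paper's definition an $(H^1_\rho,q)$-atom related to $B(x,r)$ requires $r\le 2\rho(x)$, and cancellation is waived only when $\rho(x)\le r\le 2\rho(x)$; for a ball $B(x,r)\in\D$ with $r$ much larger than $\rho(x)$, the function $\mu(B)^{-1}\mathbf{1}_B\,\mathrm{sgn}(g)$ is not an admissible atom. To control the average of $|g|$ over such large balls you must cover $B(x,r)$ by balls $B(x_i,\rho(x_i))$, apply your atom test on each, and then sum; the admissibility inequality for $\rho$ together with the doubling property (\ref{RD-spaces}) is exactly what keeps the number of such balls, weighted by their measures, under control, at the price of a constant depending on $C_0$ and $k_0$. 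This chaining step is carried out in \cite{YYZ, YZ} and cannot be omitted. A symmetric remark applies in the forward direction of (ii): your splitting handles non-cancellation atoms on balls with $\rho(x)\le r\le 2\rho(x)$, which is sufficient there since those are the only non-cancellation atoms that occur, but you should say so explicitly rather than estimating over all of $\D$.
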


\section{The product of functions in $BMO(\mathcal X)$ and $H^1(\mathcal X)$}

Remark that if $g\in \G(\beta,\gamma)$, then 
\begin{equation}\label{bounded property of test functions}
\|g\|_{L^\infty}\leq C \frac{1}{V_1(x_0)}\|g\|_{\G(\beta,\gamma)}
\end{equation}
and 
\begin{equation}\label{integrable property of test functions}
\|g\|_{L^1}\leq (C +\sum_{j=0}^\infty 2^{-j\gamma}) \|g\|_{\G(\beta,\gamma)}\leq C \|g\|_{\G(\beta,\gamma)}.
\end{equation}

\begin{proposition}\label{multipliers for bmo}
Let $\beta\in (0,1]$ and $\gamma\in (0,\infty)$. Then, $g$ is a pointwise multiplier of $BMO(\X)$ for all $g\in \G(\beta,\gamma)$. More precisely, 
$$\|gf\|_{BMO}\leq C \frac{1}{V_1(x_0)}\|g\|_{\G(\beta,\gamma)}\|f\|_{BMO^+}$$
for all $f\in BMO(\X)$. Here and what in follows, 
$$\|f\|_{BMO^+}:= \|f\|_{BMO} + \frac{1}{V_1(x_0)}\int_{B(x_0,1)}|f(x)|d\mu(x).$$
\end{proposition}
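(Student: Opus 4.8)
The plan is to estimate $MO(gf,B)$ uniformly over all balls $B=B(y,s)$, splitting into two regimes according to the size of $s$ relative to $1+d(x_0,y)$. Throughout I write $a:=d(x_0,y)$ and recall that $MO(gf,B)\le \frac{2}{\mu(B)}\int_B|gf-c|\,d\mu$ for any constant $c$. I call $B$ a \emph{small ball} if $s\le \frac{1+a}{4\kappa^2}$ and a \emph{large ball} otherwise.

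For a small ball, a routine quasi-metric computation shows that for every $x\in B$ the smoothness condition (ii) of Definition \ref{definition for test functions} (with reference ball $B(x_0,1)$) applies, i.e. $d(x,y)\le \frac{1+d(x_0,x)}{2\kappa}$, and moreover $1+d(x_0,x)\sim 1+a$ and $V_1(x_0)+V(x_0,x)\sim V_1(x_0)+V(x_0,y)$. I would choose $c=g(y)f_B$ and use $g(x)f(x)-g(y)f_B=g(x)(f(x)-f_B)+(g(x)-g(y))f_B$ to get
$$MO(gf,B)\le \frac{2\|g\|_{L^\infty}}{\mu(B)}\int_B|f-f_B|\,d\mu + \frac{2|f_B|}{\mu(B)}\int_B|g(x)-g(y)|\,d\mu.$$
The first summand is immediately $\le C V_1(x_0)^{-1}\|g\|_{\G(\beta,\gamma)}\|f\|_{BMO}$ by (\ref{bounded property of test functions}) and the definition of $BMO$.

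The second summand is where the real work lies, and it is the step I expect to be the main obstacle. Applying (ii) together with the comparabilities above yields
$$\frac{1}{\mu(B)}\int_B|g(x)-g(y)|\,d\mu \le C\|g\|_{\G(\beta,\gamma)}\Big(\frac{s}{1+a}\Big)^\beta \frac{1}{V_1(x_0)+V(x_0,y)}\Big(\frac{1}{1+a}\Big)^\gamma,$$
so the polynomial Hölder and spatial decay of $g$ must absorb the growth of $|f_B|$. I would control the latter by the standard chaining argument for $BMO$ averages: telescoping through the dyadic balls $B(y,2^ks)$ up to radius $\sim 1+a$, passing to a concentric ball about $x_0$, and descending to $B(x_0,1)$, where each step costs $O(\|f\|_{BMO})$ by doubling and $|f_{B(x_0,1)}|\le\|f\|_{BMO^+}$, giving
$$|f_B|\le C\|f\|_{BMO^+}\Big(1+\log(1+a)+\log\tfrac{1+a}{s}\Big).$$
Multiplying, the bracketed logarithmic factor is tamed by $(s/(1+a))^\beta(1+a)^{-\gamma}$: since $\sup_{0<t\le1}t^\beta\log(1/t)<\infty$ and $\sup_{a\ge0}(1+\log(1+a))(1+a)^{-\gamma}<\infty$ (recall $\beta>0$ and $\gamma>0$), and since $V_1(x_0)+V(x_0,y)\ge V_1(x_0)$, the second summand is also $\le C V_1(x_0)^{-1}\|g\|_{\G(\beta,\gamma)}\|f\|_{BMO^+}$.

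For a large ball I would argue more crudely with $c=0$, so that $MO(gf,B)\le \frac{2}{\mu(B)}\int_\X|g||f|\,d\mu$. Decomposing $\X$ into the annuli $A_0=B(x_0,1)$ and $A_k=B(x_0,2^k)\setminus B(x_0,2^{k-1})$, on which $|g|\le C\|g\|_{\G(\beta,\gamma)}2^{-k\gamma}/V_{2^k}(x_0)$ by condition (i), and using $\frac{1}{V_{2^k}(x_0)}\int_{B(x_0,2^k)}|f|\le C(1+k)\|f\|_{BMO^+}$ (again from chaining), the convergent series $\sum_k(1+k)2^{-k\gamma}$ gives $\int_\X|g||f|\,d\mu\le C\|g\|_{\G(\beta,\gamma)}\|f\|_{BMO^+}$. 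Finally, $s>\frac{1+a}{4\kappa^2}$ forces $x_0$ into a fixed dilate of $B$, so the doubling property yields $\mu(B)\ge C^{-1}V_1(x_0)$, whence $MO(gf,B)\le C V_1(x_0)^{-1}\|g\|_{\G(\beta,\gamma)}\|f\|_{BMO^+}$. Taking the supremum over all balls in both regimes completes the proof. \epf
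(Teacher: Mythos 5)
Your argument is correct, but it follows a genuinely different route from the paper. The paper does not estimate $MO(gf,B)$ directly: it invokes Nakai's characterization of pointwise multipliers of $BMO$ (\cite[Theorem 1.1]{Na}), which reduces the proposition to checking that $g$ is bounded (this is (\ref{bounded property of test functions})) and that $g$ satisfies the two logarithmically weighted mean-oscillation conditions (\ref{multipliers for bmo 1}) and (\ref{multipliers for bmo 2}); these are then verified from the size and Lipschitz conditions of Definition \ref{definition for test functions}, with a case analysis on $r$ versus $d(x_0,a)$. You instead prove the multiplier estimate from scratch: the two regimes you distinguish (small versus large balls), the splitting $g(x)f(x)-g(y)f_B=g(x)(f(x)-f_B)+(g(x)-g(y))f_B$, and the chaining bound $|f_B|\leq C\|f\|_{BMO^+}\big(1+\log(1+a)+\log\frac{1+a}{s}\big)$ are exactly the mechanism that Nakai's theorem packages away --- the weights $\log(e+1/r)$ and $\log(e+d(x_0,a)+r)$ in (\ref{multipliers for bmo 1})--(\ref{multipliers for bmo 2}) are there precisely to absorb the logarithmic growth of $BMO$ averages that you control by telescoping. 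What your approach buys is self-containedness and transparency (no appeal to an external multiplier theorem, and the role of $\beta>0$ and $\gamma>0$ in killing the logarithms is explicit); what the paper's approach buys is brevity and a clean reduction to properties of $g$ alone, independent of $f$. I checked the delicate points of your version --- the verification that $d(x,y)\leq\frac{1+d(x_0,x)}{2\kappa}$ on small balls, the comparability $1+d(x_0,x)\sim 1+a$, the boundedness of $t^\beta\log(1/t)$ on $(0,1]$ and of $(1+a)^{-\gamma}\log(e+a)$, and the lower bound $\mu(B)\geq C^{-1}V_1(x_0)$ in the large-ball regime --- and they all go through, so your proof stands as a valid alternative.
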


Using Proposition \ref{multipliers for bmo}, for $b\in BMO(\X)$ and $f\in H^1(\X)$, one can define the distribution $b\times f\in (\G^{\epsilon}_0(\beta,\gamma))'$ by the rule 
\begin{equation}\label{distribution definition for products}
\langle b\times f, \phi\rangle := \langle \phi b, f\rangle 
\end{equation}
for all $\phi\in \G^{\epsilon}_0(\beta,\gamma)$, where the second bracket stands for the duality bracket between $H^1(\X)$ and its dual $BMO(\X)$.

\begin{proof}[Proof of Proposition \ref{multipliers for bmo}]
By (\ref{bounded property of test functions}) and the pointwise multipliers characterization of $BMO(\X)$ (see \cite[Theorem 1.1]{Na}), it is sufficient to show that
\begin{equation}\label{multipliers for bmo 1}
\log(e +1/r)MO(g, B(a,r))\leq C \frac{1}{V_1(x_0)}\|g\|_{\G(\beta,\gamma)}
\end{equation}
and
\begin{equation}\label{multipliers for bmo 2}
\log(e+ d(x_0,a) + r) MO(g, B(a,r))\leq C \frac{1}{V_1(x_0)}\|g\|_{\G(\beta,\gamma)}
\end{equation}
hold for all balls $B(a,r)\subset \X$. It is easy to see that (\ref{multipliers for bmo 1}) follows from (\ref{bounded property of test functions}) and the Lipschitz property of $g$ (see (ii) of Definition \ref{definition for test functions}). Let us now establish (\ref{multipliers for bmo 2}). If $r<1$, then by (\ref{multipliers for bmo 2}) follows from the Lipschitz property of $g$ and the fact that $\lim_{\lambda\to\infty}\frac{\log(\lambda)}{\lambda^\beta}=0$. Otherwise, we consider the following two cases:
\begin{enumerate}[(a)]
\item The case: $1\leq r\leq \frac{1}{4\kappa^3} d(x_0,a)$. Then, for every $x,y\in B(a,r)$, one has $d(x_0,a)\leq \frac{4\kappa^3}{4\kappa^2-1}$ and $d(x,y)\leq \frac{d(x_0,x)}{2\kappa}$. Hence, the Lipschitz property of $g$ yields
$$|g(x)-g(y)|\leq C \|g\|_{\G(\beta,\gamma)}\frac{1}{V_1(x_0)}\Big(\frac{1}{d(x_0,a)}\Big)^\gamma.$$
 This implies that (\ref{multipliers for bmo 2}) holds since $\lim_{\lambda\to\infty}\frac{\log(\lambda)}{\lambda^\gamma}=0$.
\item The case: $r> \frac{1}{4\kappa^3} d(x_0,a)$. Then, one has $B(x_0,r)\subset B(a, \kappa(4\kappa^3 +1)r)$. Hence, by (\ref{RD-spaces}), we get
\begin{eqnarray*}
\log(e+ d(x_0,a) + r) MO(g, B(a,r)) &\leq& C \frac{\log(2r)}{V_r(x_0)} \|g\|_{L^1}\\
&\leq& C\frac{\log(2r)}{r^{\mathfrak d}}\frac{1}{V_1(x_0)}\|g\|_{\G(\beta,\gamma)}\\
&\leq& C \frac{1}{V_1(x_0)}\|g\|_{\G(\beta,\gamma)}.
\end{eqnarray*}
This proves (\ref{multipliers for bmo 2}) and thus the proof of Propsition \ref{multipliers for bmo} is finished.
\end{enumerate}

\end{proof}

Next we define $L^{\Xi}(\X)$ as the space of $\mu$-measurable functions $f$ such that 
$$\int_{\X} \frac{e^{|f(x)|}-1}{(1+ d(x_0,x))^{2\n}}d\mu(x)<\infty.$$
Then, the norm on the space $L^{\Xi}(\X)$ is defined by
$$\|f\|_{L^{\Xi}}=\inf\left\{ \lambda>0: \int_{\X} \frac{e^{|f(x)|/\lambda}-1}{(1+ d(x_0,x))^{2\n}}d\mu(x)\leq 1\right\}.$$

Recall the following two lemmas due to Feuto \cite{Feu}.

\begin{lemma}\label{Feuto, Lemma 3.2}
For every $f\in BMO(\X)$,
$$\|f - f_{B(x_0,1)}\|_{L^{\Xi}}\leq C \|f\|_{BMO}.$$
\end{lemma}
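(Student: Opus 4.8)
The plan is to reduce the statement, by homogeneity of the Luxemburg norm, to the normalized case $\|f\|_{BMO}=1$ and then to exhibit a geometric constant $\lambda$ (allowed to depend on $V_1(x_0)$ and the structural constants of $\X$) for which
$$\int_{\X}\frac{e^{|g(x)|/\lambda}-1}{(1+d(x_0,x))^{2\n}}\,d\mu(x)\le 1, \qquad g:=f-f_{B(x_0,1)}.$$
The engine of the estimate is the John--Nirenberg inequality, valid on any space of homogeneous type (see \cite{CW}): there are geometric constants $c_0,C_0>0$ such that for every ball $B$ and every $s>0$, $\mu(\{x\in B:|f(x)-f_B|>s\})\le C_0 e^{-c_0 s}\mu(B)$ in this normalization.

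First I would decompose $\X$ into the annuli $E_0:=B(x_0,1)$ and $E_k:=B(x_0,2^k)\setminus B(x_0,2^{k-1})$ for $k\ge 1$, so that the weight $(1+d(x_0,x))^{-2\n}$ is comparable to $2^{-2k\n}$ on $E_k$. Writing $B_k:=B(x_0,2^k)$, on each $E_k$ one has $|g(x)|\le |f(x)-f_{B_k}|+a_k$ with $a_k:=|f_{B_k}-f_{B(x_0,1)}|$. A telescoping argument, comparing consecutive averages $f_{B_{j}}$ and $f_{B_{j+1}}$ through the doubling property, gives $a_k\le Ck$; simultaneously, (\ref{RD-spaces}) yields $\mu(B_k)\le C2^{k\n}V_1(x_0)$. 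Thus on $E_k$ the averages grow only linearly in $k$ while the balls grow polynomially, and both are to be weighed against the factor $2^{-2k\n}$.

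Next I would estimate each annular contribution through the layer-cake identity $\int_{E_k}(e^{|g|/\lambda}-1)\,d\mu=\lambda^{-1}\int_0^\infty e^{s/\lambda}\,\mu(\{x\in E_k:|g(x)|>s\})\,ds$, splitting the $s$-integral at $s=a_k$: for $s\le a_k$ I would bound the distribution function by $\mu(B_k)$, and for $s>a_k$ by John--Nirenberg applied on $B_k$, namely $C_0 e^{-c_0(s-a_k)}\mu(B_k)$. Provided $\lambda\ge 2/c_0$ the exponential decay dominates $e^{s/\lambda}$ and the $s$-integral is bounded by $C\mu(B_k)\,e^{a_k/\lambda}(a_k+1)$. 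Inserting the bounds on $a_k$ and $\mu(B_k)$ and the weight, the $k$-th annular contribution is at most $\frac{C V_1(x_0)}{\lambda}\,2^{-k\n}e^{Ck/\lambda}(k+1)$. Summing in $k$ (the $k=0$ term being handled directly by John--Nirenberg on $B(x_0,1)$) gives a total bound of the form $C_{*}V_1(x_0)/\lambda$, and choosing $\lambda$ a large geometric multiple of $\max(1,V_1(x_0))$ makes this $\le 1$, which is exactly the desired norm estimate after undoing the normalization.

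The main obstacle is the competition, on the far annuli, between the logarithmic growth of the averages---encoded in the factor $e^{a_k/\lambda}$ with $a_k\sim k$---and the polynomial decay $2^{-k\n}$ of the weight. The resolution is to take $\lambda$ large: a single enlargement of $\lambda$ simultaneously forces $1/\lambda<c_0$, so that the John--Nirenberg decay beats the $e^{s/\lambda}$ growth in each $s$-integral, and supplies the overall prefactor $1/\lambda$ that drives the summed estimate below $1$. Verifying that these two effects can be reconciled by one choice of $\lambda$, uniformly in $k$, is the crux; the remaining manipulations (the telescoping bound on $a_k$, the convergent geometric series in $k$) are routine.
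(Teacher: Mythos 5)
Your argument is correct: the normalization by homogeneity, the annular decomposition with the telescoping bound $|f_{B_k}-f_{B(x_0,1)}|\leq Ck$, the layer-cake formula combined with John--Nirenberg on each $B_k$, and the final choice of a single large $\lambda$ (beating both the constant $c_0$ and the geometric series $\sum_k 2^{-k\n}e^{Ck/\lambda}(k+1)$) all fit together as claimed. Note that the paper itself gives no proof of this lemma --- it is quoted from Feuto \cite{Feu} --- and your argument is essentially the standard one underlying that reference, so there is nothing to compare beyond observing that your write-up supplies the details the paper omits; the only point worth making explicit is that the resulting constant depends on $V_1(x_0)$ (through $\lambda$), which is consistent with the paper's convention that $x_0$ is fixed.
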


\begin{lemma}\label{Feuto, Lemma 3.1}
Let $q\in (1,\infty]$. Then,
$$\|(\b - \b_B)\M(\a)\|_{L^1}\leq C \|\b\|_{BMO}$$
for all $\b\in BMO(\X)$ and for all $(H^1,q)$-atom $\a$ related to the ball $B$.
\end{lemma}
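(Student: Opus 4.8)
The plan is to estimate $\|(\b-\b_B)\M(\a)\|_{L^1}$ by splitting $\X$ into a dilated ball and its complement. Write $B=B(x_B,r_B)$ for the ball to which $\a$ is associated, fix a dilation constant $\Lambda=\Lambda(\kappa)$ large enough (e.g.\ $\Lambda=2\kappa^2+\kappa$), and set $\widetilde B:=B(x_B,\Lambda r_B)$. I would then write
$$\|(\b-\b_B)\M(\a)\|_{L^1}=\int_{\widetilde B}|\b-\b_B|\,\M(\a)\,d\mu+\int_{\X\setminus\widetilde B}|\b-\b_B|\,\M(\a)\,d\mu=:\mathrm{I}+\mathrm{II},$$
and bound each of $\mathrm I$ and $\mathrm{II}$ by $C\|\b\|_{BMO}$.

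For the local term $\mathrm I$, I would apply H\"older's inequality with exponents $q'$ and $q$,
$$\mathrm I\le\Big(\int_{\widetilde B}|\b-\b_B|^{q'}\,d\mu\Big)^{1/q'}\|\M(\a)\|_{L^q}.$$
Since $\M$ is bounded on $L^q(\X)$ for $q\in(1,\infty]$ (see \cite{GLY}), the definition of an $(H^1,q)$-atom gives $\|\M(\a)\|_{L^q}\le C\|\a\|_{L^q}\le C(V_{r_B}(x_B))^{1/q-1}$. For the first factor, the John--Nirenberg inequality on spaces of homogeneous type yields $\big(\frac{1}{\mu(\widetilde B)}\int_{\widetilde B}|\b-\b_{\widetilde B}|^{q'}d\mu\big)^{1/q'}\le C\|\b\|_{BMO}$, while $|\b_{\widetilde B}-\b_B|\le C\|\b\|_{BMO}$ by the doubling property (as $B\subset\widetilde B$ have comparable measures). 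Combining these with $\mu(\widetilde B)\sim V_{r_B}(x_B)$ gives $\big(\int_{\widetilde B}|\b-\b_B|^{q'}d\mu\big)^{1/q'}\le C(V_{r_B}(x_B))^{1/q'}\|\b\|_{BMO}$, and since $1/q+1/q'=1$ the two factors multiply to $C\|\b\|_{BMO}$. (For $q=\infty$ one uses $\M(\a)\le C(V_{r_B}(x_B))^{-1}$ directly in place of H\"older.)

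The term $\mathrm{II}$ rests on the pointwise decay estimate
$$\M(\a)(x)\le C\,\frac{1}{V(x_B,x)}\Big(\frac{r_B}{d(x_B,x)}\Big)^\beta\qquad(x\notin\widetilde B),$$
which I would establish first. For $x\notin\widetilde B$ and any normalized $\varphi$ with $\|\varphi\|_{\G(x,r,\beta,\gamma)}\le1$, the cancellation $\int\a\,d\mu=0$ lets me write $\langle\a,\varphi\rangle=\int_B\a(y)[\varphi(y)-\varphi(x_B)]\,d\mu(y)$; since $d(y,x_B)\le r_B\le\frac{r+d(x,y)}{2\kappa}$ for $y\in B$ (this is where $\Lambda$ large is used), the regularity estimate (ii) of Definition \ref{definition for test functions}, together with $d(x,y)\sim d(x_B,x)$ and $V(x,y)\sim V(x_B,x)$ for $y\in B$, gives $|\varphi(y)-\varphi(x_B)|\le C\big(\frac{r_B}{d(x_B,x)}\big)^\beta\frac{1}{V(x_B,x)}$ uniformly in $r$; integrating against $|\a|$ and using $\|\a\|_{L^1}\le1$ yields the claim. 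Granting this, I decompose $\X\setminus\widetilde B=\bigcup_{k\ge0}(B_{k+1}\setminus B_k)$ with $B_k:=B(x_B,2^k\Lambda r_B)$. On $B_{k+1}\setminus B_k$ one has $d(x_B,x)\sim2^kr_B$ and $V(x_B,x)\sim\mu(B_{k+1})$, so
$$\mathrm{II}\le C\sum_{k\ge0}2^{-k\beta}\,\frac{1}{\mu(B_{k+1})}\int_{B_{k+1}}|\b-\b_B|\,d\mu.$$
Using $\frac{1}{\mu(B_{k+1})}\int_{B_{k+1}}|\b-\b_{B_{k+1}}|\,d\mu\le\|\b\|_{BMO}$ and the telescoping bound $|\b_{B_{k+1}}-\b_B|\le C(k+1)\|\b\|_{BMO}$ (valid since consecutive balls have comparable measures by doubling), the inner averages are $\le C(k+2)\|\b\|_{BMO}$, and the geometric factor $2^{-k\beta}$ makes $\sum_{k\ge0}2^{-k\beta}(k+2)$ converge; hence $\mathrm{II}\le C\|\b\|_{BMO}$.

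I expect the main obstacle to be the pointwise decay estimate for $\M(\a)$ outside $\widetilde B$: one must verify that the regularity condition (ii) of Definition \ref{definition for test functions} applies for every scale $r>0$ in the supremum defining $\M$ --- this is where $\Lambda$ is chosen large enough to force $d(y,x_B)\le\frac{r+d(x,y)}{2\kappa}$ for all $y\in B$ --- and that the resulting bound is uniform in $r$, which holds because the factors $\big(\frac{r}{r+d(x_B,x)}\big)^\gamma\le1$ and $\frac{r_B}{r+d(x_B,x)}\le\frac{r_B}{d(x_B,x)}$ already absorb the $r$-dependence. The remaining subtlety is only bookkeeping: the oscillation averages grow like $k\|\b\|_{BMO}$, but this growth is defeated by the geometric decay $2^{-k\beta}$ furnished by the atom's cancellation.
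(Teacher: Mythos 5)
Your argument is correct and complete: the split into a dilated ball $\widetilde B$ and its complement, with H\"older plus John--Nirenberg on the local piece and the cancellation-driven decay $\M(\a)(x)\lesssim V(x_B,x)^{-1}(r_B/d(x_B,x))^{\beta}$ on annuli for the far piece, is exactly the standard proof of this estimate. Note that the paper itself offers no proof of this lemma --- it is quoted directly from Feuto \cite{Feu} --- and your argument is essentially the one given in that reference, so there is nothing to reconcile.
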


The main point in the proof of Theorem \ref{the first main theorem} is the following.

\begin{proposition}\label{key lemma for Orlicz functions}
\begin{enumerate}[(i)]
\item For any $f\in L^1(\X)$ and $g\in L^{\Xi}(\X)$, we have
$$\|fg\|_{L^{\log}}\leq 64{\n}^2 \|f\|_{L^1}\|g\|_{L^{\Xi}}.$$
\item For any $f\in L^1(\X)$ and $g\in BMO(\X)$, we have
$$\|fg\|_{L^{\log}}\leq C \|f\|_{L^1}\|g\|_{BMO^+}.$$
\end{enumerate}
\end{proposition}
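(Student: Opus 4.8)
The plan is to derive both parts from a single pointwise ``Young-type'' inequality comparing the Musielak--Orlicz density of $L^{\log}$ with the $L^1$ density of $f$ and the exponential density of $g$; part (ii) will then follow from part (i) by the elementary decomposition of a $BMO$ function together with Lemma \ref{Feuto, Lemma 3.2}. Throughout I abbreviate $w(x):=\log(e+d(x_0,x))$, so that $w(x)\ge 1$ and $(1+d(x_0,x))^{2\n}\le e^{2\n w(x)}$.

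First I would establish the pointwise estimate: for all $a,b\ge 0$ and all $x\in\X$,
\[
\frac{ab}{\log(e+ab)+w(x)}\ \le\ C\,\n\,a\ +\ C\,\frac{e^{b}-1}{(1+d(x_0,x))^{2\n}},
\]
with an absolute constant $C$, proved by case analysis. If $b\le 4\n\,w(x)$, then discarding $\log(e+ab)$ from the denominator gives $\frac{ab}{\log(e+ab)+w}\le \frac{ab}{w}\le 4\n\,a$, which is the first term. If $b> 4\n\,w(x)$ I split according to the size of $a$. Since the left-hand side is increasing in $a$, when $a\le e^{b/2}$ its value is at most $be^{b/2}/(\log(e^{b/2})+w)\le 2e^{b/2}$, and because $(1+d(x_0,x))^{2\n}\le e^{2\n w}\le e^{b/2}$ this is controlled by $C(e^{b}-1)/(1+d(x_0,x))^{2\n}$; when $a> e^{b/2}$ one has $\log(e+ab)>b/2$, so the denominator exceeds $b/2$ and the left-hand side is at most $2a$, again the first term. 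This yields the displayed inequality.

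To estimate the Luxemburg ``norm'' in part (i) I then apply this inequality with $a=|f(x)|/\lambda_1$ and $b=|g(x)|/\lambda_2$, where $\lambda_1\lambda_2=\lambda$, so that
\[
\frac{|f(x)g(x)|/\lambda}{\log(e+|f(x)g(x)|/\lambda)+w(x)}\ \le\ C\n\,\frac{|f(x)|}{\lambda_1}\ +\ C\,\frac{e^{|g(x)|/\lambda_2}-1}{(1+d(x_0,x))^{2\n}}.
\]
Integrating over $\X$, the first term contributes $C\n\|f\|_{L^1}/\lambda_1$, while for the second I use the convexity estimate $e^{t/K}-1\le K^{-1}(e^{t}-1)$ (valid for $K\ge1$, $t\ge0$): taking $\lambda_2=K\|g\|_{L^{\Xi}}$ makes the second integral at most $1/K$ by the definition of $\|g\|_{L^{\Xi}}$. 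Choosing $\lambda_1$ a suitable multiple of $\n\|f\|_{L^1}$ and $K$ a large enough absolute constant forces the total modular below $1$, whence $\|fg\|_{L^{\log}}\le\lambda=\lambda_1\lambda_2$; a careful tracking of the constants produces a bound of the form $C\n^2\|f\|_{L^1}\|g\|_{L^{\Xi}}$, which with the sharp choices is the stated $64\n^2$.

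For part (ii) I decompose $g=(g-g_{B(x_0,1)})+g_{B(x_0,1)}$. For the first piece, part (i) and Lemma \ref{Feuto, Lemma 3.2} give
\[
\|f(g-g_{B(x_0,1)})\|_{L^{\log}}\le 64\n^2\|f\|_{L^1}\|g-g_{B(x_0,1)}\|_{L^{\Xi}}\le C\|f\|_{L^1}\|g\|_{BMO}.
\]
For the second, $g_{B(x_0,1)}f$ is a scalar multiple of $f$, and since $\varphi(x,t)\le t/2$ one has the continuous embedding $L^1\subset L^{\log}$; hence $\|g_{B(x_0,1)}f\|_{L^{\log}}=|g_{B(x_0,1)}|\,\|f\|_{L^{\log}}\le C\|f\|_{L^1}\|g\|_{BMO^+}$, using $|g_{B(x_0,1)}|\le \frac{1}{V_1(x_0)}\int_{B(x_0,1)}|g|\,d\mu\le\|g\|_{BMO^+}$. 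Adding the two pieces via the (quasi-)triangle inequality of $L^{\log}$ gives part (ii). I expect the genuine difficulty to lie entirely in the pointwise inequality: the interplay $(1+d(x_0,x))^{2\n}\le e^{2\n w(x)}$ is exactly what forces the threshold $b\sim 4\n w(x)$ and hence the dependence on $\n$, and the constant tracking needed to reach the clean value $64\n^2$ is delicate; everything after it is routine modular bookkeeping.
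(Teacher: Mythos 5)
Your proof is correct and takes essentially the same route as the paper: both arguments reduce part (i) to the pointwise bound $\frac{ab}{\log(e+ab)+\log(e+d(x_0,x))}\lesssim \n\, a+\frac{e^{b}-1}{(1+d(x_0,x))^{2\n}}$ followed by integration of the modular, and both obtain (ii) from (i) via Lemma \ref{Feuto, Lemma 3.2}; the paper merely derives that pointwise bound from the two algebraic inequalities $\log(e+ab)\le 2(\log(e+a)+\log(e+b))$ and $\frac{ab}{\log(e+ab)}\le a+e^{b}-1$ under the symmetric normalization $\|f\|_{L^1}=\|g\|_{L^{\Xi}}=\frac{1}{8\n}$, whereas you prove it by case analysis and scale the two factors separately. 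Two harmless cosmetic points: your asymmetric choice $\lambda_1\sim\n\|f\|_{L^1}$, $\lambda_2\sim\|g\|_{L^{\Xi}}$ actually produces a constant of order $\n$ rather than the stated $64\n^2$, and your case thresholds (like the paper's use of $8\n(e^t-1)\le e^{8\n t}-1$) implicitly assume $\n$ is bounded below by an absolute constant, which is legitimate since the exponent $\n$ in (\ref{RD-spaces}) may always be enlarged.
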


\begin{proof}
(i) If $\|f\|_{L^1}=0$ or $\|g\|_{L^{\Xi}}=0$, then there is nothing to prove. Otherwise, we may assume that $\|f\|_{L^1}=\|g\|_{L^{\Xi}}=\frac{1}{8\n}$ since homogeneity of the norms. Then, we need to prove that
$$\int_{\X} \frac{|f(x)g(x)|}{\log(e + |f(x)g(x)|)+\log(e+ d(x_0,x))}d\mu(x)\leq 1.$$
Indeed, by using the following two inequalities
$$\log(e+ ab)\leq 2 (\log(e+a) + \log(e+b)),\; a,b\geq 0,$$
and
$$\frac{ab}{\log(e+ab)}\leq a + (e^b -1),\; a,b\geq 0,$$
we obtain that, for every $x\in \X$,
\begin{eqnarray*}
&&\frac{(1+ d(x_0,x))^{2\n}|f(x)g(x)|}{4\n(\log(e+|f(x)g(x)|) + \log(e+d(x_0,x)))}\\
 &\leq& \frac{(1+ d(x_0,x))^{2\n}|f(x)g(x)|}{2(\log(e+|f(x)g(x)|) + \log(e+(1+d(x_0,x))^{2\n}))}\\
 &\leq& \frac{(1+ d(x_0,x))^{2\n}|f(x)||g(x)|}{\log(e+ (1+ d(x_0,x))^{2\n}|f(x)||g(x)|)}\\
 &\leq& (1+ d(x_0,x))^{2\n}|f(x)| + (e^{|g(x)|} -1).
\end{eqnarray*}
This together with the fact $8\n (e^{|g(x)|}-1)\leq e^{8\n|g(x)|}-1$ give
\begin{eqnarray*}
&&\int_{\X} \frac{|f(x)g(x)|}{\log(e + |f(x)g(x)|)+\log(e+ d(x_0,x))}d\mu(x) \\
&\leq& 4\n \|f\|_{L^1} +\frac{1}{2}\int_{\X} \frac{e^{8\n |g(x)|}-1}{(1+ d(x_0,x))^{2\n}} d\mu(x)\\
&\leq& \frac{1}{2} + \frac{1}{2} =1,
\end{eqnarray*}
which completes the proof of (i).

(ii) It follows directly from (i) and Lemma \ref{Feuto, Lemma 3.2}.

\end{proof}

Now we ready to give the proof for Theorem \ref{the first main theorem}.

\begin{proof}[\bf Proof of Theorem \ref{the first main theorem}]
By (i) of Theorem \ref{atomic decomposition}, $f$ can be written as
$$f=\sum_{j=1}^\infty \lambda_j a_j$$
where the $a_j$'s are $(H^1,\infty)$-atoms related to the balls $B_j$'s and $\sum_{j=1}^\infty |\lambda_j|\leq C \|f\|_{H^1}$. Therefore, for all $b\in BMO(\X)$, we have
\begin{equation}\label{first theorem, integrable function}
\left\|\sum_{j=1}^\infty \lambda_j (b-b_{B_j})a_j\right\|_{L^1} \leq \sum_{j=1}^\infty |\lambda_j| \|(b-b_{B_j})a_j\|_{L^1}\leq C \|b\|_{BMO}\|f\|_{H^1}.
\end{equation}
By this and Definition (\ref{distribution definition for products}), we see that the series $\sum_{j=1}^\infty \lambda_j b_{B_j} a_j$ converges to $b\times f - \sum_{j=1}^\infty \lambda_j (b-b_{B_j})a_j$ in $(\G^{\epsilon}_0(\beta,\gamma))'$. Consequently, if we define the decomposition operators as 
$${\mathscr L}_f (b)= \sum_{j=1}^\infty \lambda_j (b-b_{B_j})a_j$$
and
$${\mathscr H}_f (b)= \sum_{j=1}^\infty \lambda_j b_{B_j} a_j,$$
where the sums are in $(\G^{\epsilon}_0(\beta,\gamma))'$, then it is clear that ${\mathscr L}_f: BMO(\X)\to L^1(\X)$ is a bounded linear operator, since (\ref{first theorem, integrable function}), and for every $b\in BMO(\X)$,
$$b\times f = {\mathscr L}_f(b) + {\mathscr H}_f(b).$$
Now we only need to prove that the distribution ${\mathscr H}_f(b)$ is in $H^{\log}(\X)$. Indeed, by Lemma \ref{Feuto, Lemma 3.1} and (ii) of Proposition \ref{key lemma for Orlicz functions}, we get
\begin{eqnarray*}
\|\M({\mathscr H}_f(b))\|_{L^{\log}} &\leq& \left\| \sum_{j=1}^\infty |\lambda_j| |b_{B_j}| \M(a_j)\right\|_{L^{\log}}\\
&\leq& \left\| \sum_{j=1}^\infty |\lambda_j| |b- b_{B_j}| \M(a_j)\right\|_{L^1} + \left\| b \sum_{j=1}^\infty |\lambda_j|  \M(a_j)\right\|_{L^{\log}}\\
&\leq& C \|f\|_{H^1}\|b\|_{BMO^+}.
\end{eqnarray*}
This proves that ${\mathscr H}_f$ is bounded from $BMO(\X)$ into $H^{\log}(\X)$, and thus ends the proof of Theorem \ref{the first main theorem}.

\end{proof}

\section{The product of functions in $BMO_\rho(\X)$ and $H^1_\rho(\X)$}

For $f\in BMO_\rho(\X)$, a standard argument gives
\begin{equation}\label{relation between BMO spaces}
\|f\|_{BMO^+} \leq C \log(\rho(x_0) +1/\rho(x_0))\|f\|_{BMO_\rho}.
\end{equation}

\begin{proposition}\label{multipliers for generalized bmo associated with the admissible functions}
Let $\beta\in (0,1]$ and $\gamma\in (0,\infty)$. Then, $g$ is a pointwise multiplier of $BMO_\rho(\X)$ for all $g\in \G(\beta,\gamma)$. More precisely, for every $f\in BMO_\rho(\X)$,
$$\|gf\|_{BMO_\rho}\leq C \frac{\log(\rho(x_0)+1/\rho(x_0))}{V_1(x_0)}\|g\|_{\G(\beta,\gamma)}\|f\|_{BMO_\rho}.$$
\end{proposition}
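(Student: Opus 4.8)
The plan is to treat separately the two contributions to $\|gf\|_{BMO_\rho}$, namely the $BMO$-seminorm $\|gf\|_{BMO}$ and the supremum $\sup_{B\in\D}\frac{1}{\mu(B)}\int_B|gf|\,d\mu$ over the family $\D$ of ``large'' balls. The guiding observation is that a test function $g\in\G(\beta,\gamma)$ is simultaneously a $BMO$-multiplier (by Proposition \ref{multipliers for bmo}) and bounded on $\X$ (by (\ref{bounded property of test functions})), so each piece can be handled with a tool already available in the excerpt, and the proof reduces to bookkeeping the constants.

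For the first piece I would simply invoke Proposition \ref{multipliers for bmo}, which yields
$$\|gf\|_{BMO}\leq C\frac{1}{V_1(x_0)}\|g\|_{\G(\beta,\gamma)}\|f\|_{BMO^+},$$
and then bound $\|f\|_{BMO^+}$ by $\|f\|_{BMO_\rho}$ using the standard comparison (\ref{relation between BMO spaces}). Chaining the two estimates produces exactly the claimed factor $\log(\rho(x_0)+1/\rho(x_0))/V_1(x_0)$ in front of $\|g\|_{\G(\beta,\gamma)}\|f\|_{BMO_\rho}$, so this term already meets the target bound.

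For the second piece the key point is the uniform boundedness of $g$: by (\ref{bounded property of test functions}), $\|g\|_{L^\infty}\leq C V_1(x_0)^{-1}\|g\|_{\G(\beta,\gamma)}$. Hence for every $B\in\D$,
$$\frac{1}{\mu(B)}\int_B|g(x)f(x)|\,d\mu(x)\leq \|g\|_{L^\infty}\,\frac{1}{\mu(B)}\int_B|f(x)|\,d\mu(x)\leq C\frac{1}{V_1(x_0)}\|g\|_{\G(\beta,\gamma)}\|f\|_{BMO_\rho},$$
where the last inequality uses the very definition of $\|f\|_{BMO_\rho}$, which dominates the average of $|f|$ over every ball in $\D$. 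Taking the supremum over $B\in\D$ preserves this bound.

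Adding the two estimates gives the result, once one notes that $\log(\rho(x_0)+1/\rho(x_0))\geq\log 2$ by the arithmetic--geometric mean inequality, so the logarithm-free bound from the $\D$-average term is harmlessly absorbed into the stated constant. I do not expect any genuine obstacle: the substantive work has already been done in Proposition \ref{multipliers for bmo} and in the comparison (\ref{relation between BMO spaces}). The only point requiring mild care is precisely this last observation, that the $\D$-average contribution does \emph{not} need the logarithmic factor and is therefore subdominant to the $BMO$ contribution, which is what makes the uniform lower bound $\log(\rho(x_0)+1/\rho(x_0))\geq\log 2$ sufficient to close the argument.
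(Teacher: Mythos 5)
Your proof is correct and follows essentially the same route as the paper: split $\|gf\|_{BMO_\rho}$ into the $BMO$-seminorm (handled by Proposition \ref{multipliers for bmo} together with (\ref{relation between BMO spaces})) and the average over balls in $\D$ (handled by the $L^\infty$-bound (\ref{bounded property of test functions})). The extra remark that $\log(\rho(x_0)+1/\rho(x_0))\geq\log 2$, so the second term is absorbed into the stated constant, is a detail the paper leaves implicit but is exactly right.
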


\begin{proof}
By Proposition \ref{multipliers for bmo}, (\ref{relation between BMO spaces}) and (\ref{bounded property of test functions}), we get
\begin{eqnarray*}
\|gf\|_{BMO_\rho} &\leq&  \|gf\|_{BMO} + \|g\|_{L^\infty} \sup_{B\in\D}\frac{1}{\mu(B)}\int_B |f(x)|d\mu(x)\\
&\leq& C \frac{\log(\rho(x_0)+1/\rho(x_0))}{V_1(x_0)}\|g\|_{\G(\beta,\gamma)}\|f\|_{BMO_\rho}.
\end{eqnarray*}

\end{proof}

Using Proposition \ref{multipliers for generalized bmo associated with the admissible functions}, for $b\in BMO_\rho(\X)$ and $f\in H^1_\rho(\X)$, one can define the distribution $b\times f\in (\G^{\epsilon}_0(\beta,\gamma))'$ by the rule 
\begin{equation}\label{distribution definition for products associated with the admissible functions}
\langle b\times f, \phi\rangle := \langle \phi b, f\rangle 
\end{equation}
for all $\phi\in \G^{\epsilon}_0(\beta,\gamma)$, where the second bracket stands for the duality bracket between $H^1_\rho(\X)$ and its dual $BMO_\rho(\X)$.

\begin{proof}[\bf Proof of Theorem \ref{the second main theorem}]
By (ii) of Theorem \ref{atomic decomposition}, there exist a sequence of $(H^1_\rho,\infty)$-atoms $\{a_j\}_{j=1}^\infty$ related to the sequence of balls $\{B(x_j, r_j)\}_{j=1}^\infty$ and  $\sum_{j=1}^\infty |\lambda_j|\leq C \|f\|_{H^1_\rho}$ such that
$$f= \sum_{j=1}^\infty \lambda_j a_j= f_1 + f_2,$$
where $f_1= \sum_{r_j<\rho(x_j)} \lambda_j a_j \in H^1(\X)$ and $f_2= \sum_{r_j \geq \rho(x_j)} \lambda_j a_j$. 

We define the decomposition operators as following
$${\mathscr L}_{\rho,f}(b)= {\mathscr L}_{f_1}(b) + b f_2$$
and
$${\mathscr H}_{\rho,f}(b)= {\mathscr H}_{f_1}(b),$$
where the operators ${\mathscr L}_{f_1}$ and ${\mathscr H}_{f_1}$ are as in Theorem \ref{the first main theorem}. Then, Theorem \ref{the first main theorem} together with (\ref{relation between BMO spaces}) give
\begin{eqnarray*}
\|{\mathscr L}_{\rho,f}(b)\|_{L^1} &\leq& \|{\mathscr L}_{f_1}(b)\|_{L^1} + \sum_{r_j\geq \rho(x_j)} |\lambda_j|\|b a_j\|_{L^1}\\
&\leq& C \|f_1\|_{H^1} \|b\|_{BMO} + C  \|b\|_{BMO_\rho} \sum_{r_j\geq \rho(x_j)} |\lambda_j|\\
&\leq& C \|f\|_{H^1_\rho}\|b\|_{BMO_\rho}
\end{eqnarray*}
and 
$$\|{\mathscr H}_{\rho,f}(b)\|_{H^{\log}}\leq C \|f_1\|_{H^1}\|b\|_{BMO^+}\leq C \|f\|_{H^1_\rho}\|b\|_{BMO_\rho}.$$
This proves that the linear operator ${\mathscr L}_{\rho,f}: BMO_\rho(\X) \to L^1(\X)$ is bounded and the linear operator ${\mathscr H}_{\rho,f}: BMO_\rho(\X) \to H^{\log}(\X)$ is bounded. Moreover,
\begin{eqnarray*}
b\times f &=& b\times f_1 + b\times f_2\\
&=& ({\mathscr L}_{f_1}(b) + {\mathscr H}_{f_1}(b)) + b f_2\\
&=& {\mathscr L}_{\rho,f}(b) + {\mathscr H}_{\rho,f}(b),
\end{eqnarray*}
which ends the proof of Theorem \ref{the second main theorem}.

\end{proof}


\begin{thebibliography}{MTW1}
\bibitem{BGK} A. Bonami, S. Grellier and L. D. Ky,  Paraproducts and products of functions in $BMO(\Bbb R^n)$ and $\H^1(\Bbb R^n)$ through wavelets. J. Math. Pures Appl. (9) 97 (2012), no. 3, 230--241.

\bibitem{BIJZ} A. Bonami, T. Iwaniec, P. Jones and M. Zinsmeister,  On the product of functions in BMO and $H^1$. Ann. Inst. Fourier (Grenoble) 57 (2007), no. 5, 1405--1439.

\bibitem{CW} R. R. Coifman and G. Weiss,  Extensions of Hardy spaces and their use in analysis. Bull. Amer. Math. Soc. 83 (1977), no. 4, 569--645. 

\bibitem{Feu} J. Feuto, Products of functions in BMO and ${\mathcal H}^1$ spaces on spaces of homogeneous type. J. Math. Anal. Appl. 359 (2009), no. 2, 610--620. 

\bibitem{GLY} L. Grafakos, L. Liu and D. Yang, Maximal function characterizations of Hardy sapces on RD-spaces and their applications, Sci. China Ser. A 51 (2008), 2253--2284.

\bibitem{GLY2} L. Grafakos, L. Liu and D. Yang, Radial maximal function characterizations for Hardy spaces on RD-spaces. Bull. Soc. Math. France 137 (2009), no. 2, 225--251. 

\bibitem{HMY} Y. Han, D. M\"uller and  D. Yang,  A theory of Besov and Triebel-Lizorkin spaces on metric measure spaces modeled on Carnot-Carath\'eodory spaces. Abstr. Appl. Anal. 2008, Art. ID 893409, 250 pp. 

\bibitem{Ky1} L. D. Ky,  New Hardy spaces of Musielak-Orlicz type and boundedness of sublinear operators. Integral Equations Operator Theory 78 (2014), no. 1, 115--150. 

\bibitem{Ky2} L. D. Ky, Bilinear decompositions for the product space $H^1_L\times BMO_L$, Math. Nachr. (2014), DOI: 10.1002/mana.201200101.

\bibitem{LP}  P. Li and L. Peng, The decomposition of product space $H^1_L\times {\rm BMO}_L$. J. Math. Anal. Appl. 349 (2009), no. 2, 484--492.

\bibitem{Na} E. Nakai,  Pointwise multipliers on weighted $BMO$ spaces. Studia Math. 125 (1997), no. 1, 35--56.

\bibitem{YYZ} D. Yang,  D. Yang and  Y. Zhou,  Localized Morrey-Campanato spaces on metric measure spaces and applications to Schr\"odinger operators. Nagoya Math. J. 198 (2010), 77--119.
 

\bibitem{YZ} D. Yang and Y. Zhou, Localized Hardy spaces $H^1$ related to admissible functions on RD-spaces and applications to Schr\"odinger operators. Trans. Amer. Math. Soc. 363 (2011), no. 3, 1197--1239.



\end{thebibliography}
\end{document}